\theoremstyle{plain}
\newtheorem{thm}{Theorem}[section]
\newtheorem{prop}[thm]{Proposition}
\newtheorem{cor}[thm]{Corollary}
\newtheorem{lemma}[thm]{Lemma}
\theoremstyle{definition}
\newtheorem{dfn}[thm]{Definition}
\newtheorem{rmk}[thm]{Remark}
\newtheorem{ex}[thm]{Example}
\newcommand{\Acal}{\mathcal{A}}
\newcommand{\Ccal}{\mathcal{C}}
\newcommand{\Dcal}{\mathcal{D}}
\newcommand{\Ecal}{\mathcal{E}}
\newcommand{\Fcal}{\mathcal{F}}
\newcommand{\Gcal}{\mathcal{G}}
\newcommand{\Ocal}{\mathcal{O}}
\newcommand{\Pcal}{\mathcal{P}}
\newcommand{\Rcal}{\mathcal{R}}
\newcommand{\Scal}{\mathcal{S}}
\newcommand{\Tcal}{\mathcal{T}}
\newcommand{\Vcal}{\mathcal{V}}
\newcommand{\Xcal}{\mathcal{X}}
\newcommand{\Zcal}{\mathcal{Z}}
\newcommand{\Abb}{\mathbb{A}}
\newcommand{\Cbb}{\mathbb{C}}
\newcommand{\Hbb}{\mathbb{H}}
\newcommand{\Pbb}{\mathbb{P}}
\newcommand{\Zbb}{\mathbb{Z}}
\newcommand{\tbf}{\mathbf{t}}
\renewcommand{\phi}{\varphi}
\DeclareMathOperator{\Hom}{\mathsf{Hom}}
\DeclareMathOperator{\End}{\mathsf{End}}
\DeclareMathOperator{\im}{im}
\DeclareMathOperator{\Spec}{\mathsf{Spec}}
\DeclareMathOperator{\Supp}{\mathsf{Supp}}
\newcommand{\mf}{\mathfrak{m}}
\newcommand{\pf}{\mathfrak{p}}
\newcommand{\qf}{\mathfrak{q}}
\newcommand{\newterm}[1]{\textbf{#1}}
\let\mod\relax
\DeclareMathOperator{\Mod}{\mathsf{Mod}}
\DeclareMathOperator{\Qcoh}{\mathsf{Qcoh}}
\DeclareMathOperator{\coh}{\mathsf{coh}}
\DeclareMathOperator{\Proj}{\mathsf{Proj}}
\DeclareMathOperator{\Inj}{\mathsf{Inj}}
\DeclareMathOperator{\Add}{\mathsf{Add}}
\DeclareMathOperator{\mod}{\mathsf{mod}}
\DeclareMathOperator{\fp}{\mathsf{fp}}
\DeclareMathOperator{\tors}{\mathsf{tors}}
\DeclareMathOperator{\len}{\mathsf{len}}
\DeclareMathOperator{\serre}{\mathsf{filt}}
\DeclareMathOperator{\Ass}{\mathsf{Ass}}
\DeclareMathOperator{\Ann}{\mathsf{Ann}}
\DeclareMathOperator{\mcrk}{\mathsf{rk}}
\DeclareMathOperator{\gldim}{\mathsf{gl.dim}}
\DeclareMathOperator{\idim}{\mathsf{i.dim}}
\DeclareMathOperator{\D}{\mathsf{D}}
\begin{document}

\title{Torsion-simple objects in abelian categories}
\author{Sergio Pavon}
\address[Sergio Pavon]{Dipartimento di Matematica ``Tullio Levi-Civita'',
Università di Padova, Via Trieste 63, 35121 Padova (PD), Italy. }
\email{sergio.pavon@math.unipd.it}

\subjclass[2020]{Primary: 18E40; Secondary: 13D30, 16E99}
\keywords{Abelian category, Torsion pair}
\thanks{This work was supported by the Department of Mathematics of
the University of Padova via its BIRD~2022, through the project \emph{Representations of quivers with
commutative coefficients}.}

\begin{abstract}
	We introduce the notion of torsion-simple objects in an abelian category:
	these are the objects which are always either torsion or torsion-free with
	respect to any torsion pair. We present some general results concerning their
	properties, and then proceed to investigate the notion in various contexts,
	such as the category of modules over an artin algebra or a commutative
	noetherian ring, and the category of quasi-coherent sheaves over the
	projective line.
\end{abstract}
\maketitle

\section*{Introduction}

In an abelian category, an important tool is the theory of torsion pairs, whose
main feature is to provide decompositions: inside every object, a torsion
pair `cuts out' a torsion subobject, its \emph{torsion part}. In practice,
torsion pairs are produced in many ways: from combinatorial data of some
kind \cite{pavo-vito-21,sten-75}, from (co)tilting and (co)silting
theory~\cite{adac-iyam-reit-14,ange-hrbe-17,ange-mark-vito-16a,colp-trli-95},
from $t$-structures via HRS-tilting \cite{happ-reit-smal-96}, from stability
conditions \cite{tatt-tref-23}, just to mention a few. Often, one obtains a
certain torsion pair (or a collection of them) through one of these means, and
then uses it to decompose each object of the category.

In this paper we reverse
this perspective: we fix an object $x$ and let the torsion pair vary, obtaining the
collection of all the possible torsion parts of $x$. Our focus is on the objects which,
despite possibly having many subobjects, only have trivial torsion parts: that
is, objects that are always either torsion or torsion-free, with respect to any torsion
pair. We call these objects \emph{torsion-simple},
as even if at times they can be rather complex (we
provide examples in the paper, see in particular \S\ref{subsec:commutative}),
they look \emph{simple} (in the technical sense of not having non-trivial
subobjects) from the point of view of torsion theory.

Knowing examples, or even a classification, of such objects could prove to be
useful in the concrete situations outlined above. Indeed, independently of how a
torsion pair is produced and described, the torsion-simple objects will neatly
divide into two groups, one falling in the torsion class, the other in the
torsion-free class.

\subsection*{Structure of the paper}
The paper is organised in the following way.

In \textbf{Section~1} we recall some preliminaries on torsion pairs and Grothendieck
categories.

In \textbf{Section~2} we give the definition of torsion-simple objects, and then
we establish some general facts. The results in this section have two
flavours: some allow us to test an objects for torsion-simplicity in a more
favourable setting, where there are fewer torsion pairs; the others reduce
\emph{a priori} the set of subobjects of an object which could be torsion parts.

In \textbf{Section~3} we apply the tools developed in the previous section to study
torsion-simple objects in a few concrete settings: categories of
finite length and their Grothendieck counterparts (this includes $\mod A$ and
$\Mod A$ for an artinian ring $A$), in \S\ref{subsec:finite-length}; the categories
$\mod R$ and $\Mod R$ of modules over a
commutative noetherian ring $R$, in \S\ref{subsec:commutative}; and finally the
categories of coherent and quasi-coherent sheaves over the projective line, in
\S\ref{subsec:sheaves}.

In the first two situations, we obtain a complete characterisation
of torsion-simple finitely generated objects. In a locally finite Grothendieck
category (such as $\Mod A$ for an artinian ring $A$) they are the finitely
generated objects with a
unique simple factor (Proposition~\ref{prop:finite-length}). Over a commutative
noetherian ring, they are the finitely generated modules with a unique associated prime
(Corollary~\ref{cor:ass-singleton}).

For infinitely generated objects, we present several examples (such as the
residue fields of a commutative ring, see Example~\ref{ex:residues}) and counterexamples. In the
commutative setting, we have more insight when $R$ is a Dedekind domain (see
Proposition~\ref{prop:dedekind}, which shows that the indecomposable injectives
are torsion-simple).

On the way, we also prove that for a ring to be torsion-simple as a module over
itself is a Morita-invariant property, generalising a result of
\cite{bica-jamb-kepk-neme-73}; see Theorem~\ref{thm:morita}.

Section~3 ends with a subsection containing a more elementary proof, based on
commutative algebra, of the fact that the cyclic module $R/\pf$ is
torsion-simple in $\Mod R$, for every prime $\pf$ of a commutative noetherian ring $R$.

\subsection*{Acknowledgments}
We would like to thank Giovanna Le Gros and Jorge Vitória for useful discussions
on the topic of this paper.

\section{Preliminaries}

\subsection{Notation and conventions}\label{subsec:conventions}
Let $\Acal$ be an abelian category. We will use lowercase letters $w,x,\dots$ to denote
the objects of $\Acal$, and the symbol $\Acal(-,-)$ for the $\Hom$-bifunctor.
For a class $\Scal\subseteq\Acal$ of objects, we will consider the two
$\Hom$-orthogonals
\[
	\Scal^\bot:=\{\,x\in\Acal\mid \Acal(s,x)=0\;\;\forall s\in\Scal\,\}
	\quad\text{and}\quad
	{}^\bot\Scal:=\{\,x\in\Acal\mid \Acal(x,s)=0\;\;\forall s\in\Scal\,\}.
\]
For a pair $(\Xcal,\Zcal)$ of classes of objects of $\Acal$, its \emph{span},
the class of extensions of objects of $\Zcal$ by objects of $\Xcal$, will be
denoted by 
\[\Xcal\ast\Zcal:=\{\,y\in\Acal\mid \exists (0\to x\to y\to z\to 0)\text{ exact with
}x\in\Xcal, z\in\Zcal\,\}.\]

In this paper we assume all our categories to be well-powered: that is, the
subobjects of any given object form a set, as opposed to a proper class. This is
the case for (subcategories of) categories with a (co)generator
\cite[Prop.~3.35]{frey-03}, and for skeletally small categories. We also assume
all our subcategories to be strict and full.

For a ring $R$, we will denote by $\Mod R$ the category of right $R$-modules,
and by $\mod R\subseteq\Mod R$ the (skeletally small) subcategory of
finitely presented $R$-modules.

\subsection{Finiteness conditions on abelian categories}
To state some of our results and examples, we will impose several finiteness
conditions on our abelian categories; we now review the necessary definitions.

First, let $\Gcal$ be a Grothendieck category. Recall that
an object $x\in\Gcal$ is \emph{finitely
presented} if $\Gcal(x,-)$ commutes with direct limits; we denote by $\fp\Gcal$
the subcategory of finitely presented objects. We say that $\Gcal$ is \emph{locally
finitely presented} if $\fp\Gcal$ is skeletally small and its closure under
direct limits is $\varinjlim\fp\Gcal=\Gcal$; see for example \cite{craw-94}. For
instance, $\Mod R$ is always locally finitely presented, with $\fp(\Mod R)=\mod
R$.

We will be particularly interested in the case in which $\fp\Gcal$ is itself
abelian: when this happens, $\Gcal$ is said to be \emph{locally coherent}. This is the case
for $\Mod R$ when $R$ is a right-coherent ring. There is a bijection between
(equivalence classes of) skeletally small abelian categories $\Acal$ and locally
coherent Grothendieck categories $\Gcal$, where $\Acal=\fp\Gcal$ and $\Gcal$ is
constructed as a ``direct limit completion'' of $\Acal$ (see \cite[\S1.4]{craw-94}).

We say that an object of an abelian category $\Acal$ is \emph{noetherian}
(resp.\ \emph{artinian}) if its poset of subobjects satisfies the ACC (resp.\
the DCC); it is \emph{of finite length} if it is both noetherian and artinian,
or equivalentely if it admits a finite filtration with simple factors.
The category $\Acal$ is then \emph{noetherian} (resp.\ \emph{artinian}, \emph{of
finite length}) if so are all its objects.

A locally coherent Grothendieck category $\Gcal$ is \emph{locally noetherian}
(resp. \emph{locally coperfect}~\cite{roos-69}, \emph{locally finite}) if $\fp\Gcal$ is
noetherian (resp. artinian, of finite length). For example, $\Mod R$ is locally
noetherian if $R$ is right-noetherian, locally coperfect if $R$ is
right-coherent and \emph{left}-perfect~(see again \cite[p.~206]{roos-69}),
locally finite if $R$ is right-artinian (hence also right-noetherian).

\subsection{Torsion pairs} Let $\Acal$ be an abelian category. Recall the
following definition:

\begin{dfn}[Dickson~\cite{dick-66}]
	A pair of classes $(\Tcal,\Fcal)$ of objects of $\Acal$ is a
	\newterm{torsion pair} if:
	\begin{enumerate}[label=(T\arabic*)]
		\item $\Acal(\Tcal,\Fcal)=0$ (they are orthogonal);
		\item $\Acal=\Tcal\ast\Fcal$ (they span $\Acal$).
	\end{enumerate}
	In this case, the class $\Tcal$ (resp.\ $\Fcal$) and its objects are called
	\newterm{torsion} (resp.\ \newterm{torsion-free}) with respect to the pair.
\end{dfn}

From the interplay between (T1) and (T2), one easily draws the following
consequences:
\begin{enumerate}[label=(\roman*)]
	\item $\Tcal={}^\bot\Fcal$ and $\Tcal^\bot=\Fcal$, so each class determines
		the other.
	\item For any object $x\in \Acal$, the sequence $(0\to tx\to x\to fx\to 0)$
		with $tx\in\Tcal, fx\in\Fcal$ provided by (T2) is unique up to
		isomorphism; it is called the \newterm{torsion sequence} of $x$ with
		respect to the torsion pair. This yields functors $t\colon\Acal\to
		\Tcal\subseteq\Acal$, called the \newterm{torsion radical}, and
		$f\colon\Acal\to\Fcal\subseteq\Acal$, called the \newterm{torsion-free
		coradical}, which are the right- and the left-adjoint of the inclusions
		$\Tcal\subseteq\Acal$ and $\Fcal\subseteq\Acal$, respectively. 
		The object $tx$ (resp.\ $fx$) is called the \newterm{torsion} (resp.
		\newterm{torsion-free}) \newterm{part} of $x$ with respect to the pair.
\end{enumerate}
As an immediate consequences of (i), $\Tcal$ is closed under extensions,
quotients and existing colimits (in particular, existing coproducts); $\Fcal$ is
closed under extensions, subobjects and existing limits (in particular, existing
products). We say that the torsion pair is \newterm{hereditary} if $\Tcal$ is
also closed under subobjects, and \newterm{of finite type} if $\Fcal$ is closed
under (existing) direct limits.

Under some additional assumptions on $\Acal$, these closure
properties are enough to characterise torsion and torsion-free classes, as
we now recall.

\begin{lemma}
	Let $\Tcal\subseteq\Acal$ be closed under extensions and quotients. Then
	$(\Tcal,\Tcal^\bot)$ is a torsion pair if and only if 
	$\Acal$ satisfies the following \emph{Largest Subobject
	Condition} with respect to $\Tcal$:
	\begin{enumerate}
		\item[(LSC)] any object of $\Acal$ admits a largest subobject belonging
			to $\Tcal$.
	\end{enumerate}
\end{lemma}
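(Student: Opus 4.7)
The plan is to prove the two directions separately; condition (T1) of the torsion pair is automatic from the definition of $\Tcal^\bot$, so the whole question reduces to condition (T2), the existence of the torsion sequence.

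For the forward implication, I assume $(\Tcal,\Tcal^\bot)$ is a torsion pair, and for $x\in\Acal$ take its torsion sequence $0\to tx\to x\to fx\to 0$. I claim $tx$ is the largest $\Tcal$-subobject of $x$: given any monomorphism $t'\hookrightarrow x$ with $t'\in\Tcal$, the composition $t'\to x\to fx$ is zero because $\Acal(\Tcal,\Tcal^\bot)=0$, so $t'$ factors through the kernel $tx$, and the factorization is itself a monomorphism (its kernel embeds into $x$ via the original mono, hence vanishes). This step is routine.

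For the converse, assume LSC holds. Given $x\in\Acal$, let $tx$ be its largest $\Tcal$-subobject and set $fx:=x/tx$; I need to show $fx\in\Tcal^\bot$. Pick any $s\in\Tcal$ and any morphism $\phi\colon s\to fx$, and let $i\colon u\hookrightarrow fx$ denote its image. Since $\Tcal$ is closed under quotients, $u\in\Tcal$. Now I form the pullback
\[
\begin{tikzcd}
y \arrow[r] \arrow[d, hook] & u \arrow[d, hook, "i"] \\
x \arrow[r, two heads] & fx
\end{tikzcd}
\]
which, since $x\twoheadrightarrow fx$ is an epimorphism (of which $u\hookrightarrow fx$ is a subobject), produces a short exact sequence $0\to tx\to y\to u\to 0$. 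As $\Tcal$ is closed under extensions and both $tx,u\in\Tcal$, we get $y\in\Tcal$. But $y$ is a subobject of $x$ containing $tx$, so by maximality $y=tx$, which forces $u=0$ and hence $\phi=0$. Therefore $\Acal(\Tcal,fx)=0$, giving $fx\in\Tcal^\bot$ and verifying (T2).

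The main (mild) obstacle is the converse: one has to exhibit a subobject of $x$ strictly larger than $tx$ and belonging to $\Tcal$ in order to derive a contradiction, and the natural candidate is the preimage of the image of a test morphism $s\to fx$. The only thing to check is that this preimage really does sit in a short exact sequence of the claimed form — i.e., that the pullback of an epi along a mono produces an epi — and then the closure of $\Tcal$ under quotients and extensions does the rest. I do not anticipate set-theoretic trouble since the paper's standing well-powered assumption ensures that talking about the ``largest'' subobject in $\Tcal$ is meaningful.
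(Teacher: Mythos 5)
Your proof is correct and follows essentially the same route as the paper: the forward direction amounts to the standard fact that the torsion part is the largest $\Tcal$-subobject, and for the converse you pass to the image of a test morphism and pull back the resulting $\Tcal$-subobject of $x/tx$ along $x\twoheadrightarrow x/tx$, producing an extension of it by $tx$ lying in $\Tcal$, which maximality forces to equal $tx$. This is exactly the paper's argument, phrased directly rather than by explicit contradiction.
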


\begin{proof}
	The implication ($\Rightarrow$) follows immediately by our previous
	observation that the torsion radical is right-adjoint to the inclusion of
	the torsion class.

	For the converse, $(\Tcal,\Tcal^\bot)$ obvioulsy verifies (T1). To verify
	(T2), for any object $x\in\Acal$ we let $w\subseteq x$ be its largest
	subobject belonging to $\Tcal$, and show that $x/w\in\Tcal^\bot$, so that
	the sequence $(0\to w\to x\to x/w\to 0)$ has the prescribed form.

	Notice that since $\Tcal$ is closed under quotients, by considering
	images of morphisms we may reduce ourselves to prove that no non-zero
	subobject of $x/w$ belongs to $\Tcal$. Assume by contradiction that
	$0\neq y\subseteq x/w$ belongs to $\Tcal$: then its ``preimage under
	$x\twoheadrightarrow x/w$'' (that is, the pullback of $y\hookrightarrow
	x/w\twoheadleftarrow x$) is a subobject of $x$ which belongs to $\Tcal$, since it is an
	extension of $y$ by $w$; but it is not contained in $w$, a contradiction.
\end{proof}

Applying the lemma to $\Acal^{op}$, one obtains a
characterisation of torsion-free classes in $\Acal$.

These characterisations are useful in view of the fact that under some
assumptions on $\Acal$, the (LSC) (or its dual) follows from closure properties.
Recall that by \S\ref{subsec:conventions} we are assuming $\Acal$ to be
well-powered, so the subobjects of any given object form a set.

\begin{lemma}
	$\Acal$ satisfies the (LSC) with respect to any additive subcategory closed under
	extensions, quotients and existing coproducts as soon as either:
	\begin{enumerate}
		\item $\Acal$ is noetherian; or
		\item $\Acal$ has arbitrary coproducts.
	\end{enumerate}
\end{lemma}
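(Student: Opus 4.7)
The plan is to fix $x\in\Acal$ and consider the set $S$ of subobjects of $x$ that belong to $\Tcal$ (this is indeed a set, by well-poweredness). It is nonempty since the zero subobject lies in the additive subcategory $\Tcal$. The common core of both cases is the observation that $S$ is closed under finite joins inside $x$: given $y_1,y_2\in S$, the direct sum $y_1\oplus y_2$ exists (binary coproducts exist in the additive category $\Acal$) and belongs to $\Tcal$ by the coproduct-closure hypothesis; the image of the canonical morphism $y_1\oplus y_2\to x$, which is the join $y_1+y_2$ in the subobject poset of $x$, is then a quotient of an object of $\Tcal$, hence in $\Tcal$.

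For case (1), I would invoke the noetherian assumption on $\Acal$: the subobject poset of $x$ satisfies ACC, so the nonempty subset $S$ admits a maximal element $y_0$. Closure under finite joins upgrades maximality to maximum-hood: for any $y\in S$, the sum $y_0+y$ lies in $S$ and dominates $y_0$, so maximality forces $y_0+y=y_0$, whence $y\leq y_0$. Thus $y_0$ is the largest subobject of $x$ belonging to $\Tcal$.

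For case (2), the existence of arbitrary coproducts in $\Acal$ makes the argument even cleaner: set $u:=\bigoplus_{y\in S}y$, which lies in $\Tcal$ by the coproduct-closure hypothesis. The image $w$ of the canonical morphism $u\to x$ is a subobject of $x$ lying in $\Tcal$ (as a quotient of $u$), and $w$ contains every $y\in S$ by construction, so it is the desired largest subobject. I do not anticipate any substantial obstacle: the proof is essentially formal once the join-closure of $S$ is noted. The only mildly subtle point, in case (1), is to recall that ACC in a poset produces maximal elements in every nonempty subset, not merely in chains, so that $S$ really does contain some element maximal within $S$. Note in passing that closure of $\Tcal$ under extensions is not needed anywhere in this lemma.
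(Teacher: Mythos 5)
Your proof is correct. In case (1) it coincides with the paper's argument in substance: the key observation in both is that the join $y_1+y_2$ of two subobjects belonging to $\Tcal$, being a quotient of the finite coproduct $y_1\oplus y_2$, again lies in $\Tcal$, which upgrades a maximal element of the poset $S$ to a maximum; you obtain the maximal element directly from ACC on the subobject poset, whereas the paper reaches it via Zorn's Lemma after noting that ascending chains stabilise — the same argument in slightly different clothing. In case (2) your route is genuinely more direct: rather than running Zorn on chains (with unions of chains realised as images of coproducts, as the paper does), you take the single coproduct $\bigoplus_{y\in S}y$ over the whole set $S$ of subobjects of $x$ lying in $\Tcal$ (a set by the standing well-poweredness assumption), and observe that the image of the canonical map to $x$ is a subobject in $\Tcal$ dominating every element of $S$, hence the maximum. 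This buys you a proof of case (2) with no appeal to Zorn at all, at the cost of treating the two cases by different mechanisms, while the paper's choice makes one uniform Zorn argument cover both hypotheses. Your closing remark is also accurate: closure of $\Tcal$ under extensions is not used in this lemma (it only enters the companion lemma characterising when $(\Tcal,\Tcal^\bot)$ is a torsion pair).
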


\begin{proof}
	Let $\Tcal\subseteq\Acal$ have the closure properties in the statement,
	and let $x\in\Acal$ be any object. Consider the poset of subobjects of $x$
	belonging to $\Tcal$: it is non-empty, as it contains $0$, and any ascending
	chain in it has an upper bound. Indeed, if $\Acal$ is noetherian, a chain
	must stabilise; if $\Acal$ admits arbitrary coproducts, the union of a
	chain, being a quotient of the coproduct, still belongs to
	$\Tcal$ (by `union', we mean the image of the canonical morphism from the
	direct limit of the chain to $x$, which contains all the terms of the chain). By
	Zorn's Lemma, there exists a maximal element in this poset. To show that it
	is in fact a maximum, just observe that the sum of two subobjects of $x$
	belonging to $\Tcal$, being a quotient of their (finite) coproduct, still
	belongs to $\Tcal$. This maximum is then the largest subobject of $x$
	belonging to $\Tcal$.
\end{proof}

We spell out the consequence and its dual:

\begin{prop}\label{prop:1-4}
	Let $\Acal$ be an abelian category. If either:
	\begin{enumerate}
		\item $\Acal$ is noetherian; or
		\item $\Acal$ has arbitrary coproducts,
	\end{enumerate}
	then $\Tcal\subseteq\Acal$ is a torsion class if and only if it is closed
	under extensions, quotients and existing coproducts. Dually, if either:
	\begin{enumerate}
		\setcounter{enumi}{2}
		\item $\Acal$ is artinian; or
		\item $\Acal$ has arbitrary products,
	\end{enumerate}
	then $\Fcal\subseteq\Acal$ is a torsion-free class if and only if it is
	closed under extensions, subobjects and existing products.
\end{prop}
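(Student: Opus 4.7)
The proposition is essentially the synthesis of the two preceding lemmas, so the plan is to assemble them rather than re-prove anything from scratch.

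First I would dispatch the ``only if'' direction, which needs no extra hypothesis on $\Acal$: once $(\Tcal,\Tcal^\bot)$ is a torsion pair, the remarks following the definition already yield that $\Tcal={}^\bot(\Tcal^\bot)$ is closed under extensions, quotients and existing colimits; in particular under existing coproducts. The dual remark for $\Fcal$ gives closure under extensions, subobjects and existing products.

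For the ``if'' direction under hypotheses (1) or (2), assume $\Tcal\subseteq\Acal$ is closed under extensions, quotients and existing coproducts. By the first lemma of the subsection, it is enough to verify that $\Acal$ satisfies the (LSC) with respect to $\Tcal$; but this is precisely the content of the second lemma, whose hypotheses are exactly (1) or (2). Hence $(\Tcal,\Tcal^\bot)$ is a torsion pair, and $\Tcal$ is a torsion class.

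For the dual statement under (3) or (4), the cleanest route is to pass to the opposite category: if $\Acal$ is artinian (resp.\ has arbitrary products), then $\Acal^{op}$ is noetherian (resp.\ has arbitrary coproducts), and a torsion-free class $\Fcal\subseteq\Acal$ closed under extensions, subobjects and existing products corresponds to a subcategory of $\Acal^{op}$ closed under extensions, quotients and existing coproducts. Applying the already-proven half in $\Acal^{op}$ yields the result. Since this is purely a matter of combining two lemmas, there is no real obstacle; the only point to mind is that the ``only if'' direction does not depend on any of the four hypotheses, so the equivalence is genuine in each of the four listed settings.
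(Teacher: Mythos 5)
Your proposal is correct and matches the paper exactly: the paper presents this Proposition as the immediate consequence of the two preceding lemmas (the (LSC) characterisation and the Zorn's Lemma argument), together with the dualisation to $\Acal^{op}$ already noted after the first lemma. Nothing further is needed.
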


This discussion was motivated by the following construction, which we will need.

Let $\Scal\subseteq\Acal$ be a class of objects. The classes
${}^\bot(\Scal^\bot)$ and ${}^\bot\Scal$ are closed under extensions, quotients
and existing coproducts, while the classes $\Scal^\bot$ and
$({}^\bot\Scal)^\bot$ are closed under extensions, subobjects and existing
products. Therefore, under any of the hypotheses (1--4) of the previous
Proposition, we can construct the torsion pairs:
\[
	({}^\bot(\Scal^\bot),\Scal^\bot)\;\;\text{\newterm{generated} by
	}\Scal,\text{ and }\quad
	({}^\bot\Scal,({}^\bot\Scal)^\bot)\;\;\text{\newterm{cogenerated} by }\Scal
\]
(notice that indeed $({}^\bot(\Scal^\bot))^\bot=\Scal^\bot$, and similarly on the other
side). As the terminology suggests, one easily sees that the torsion class
generated by $\Scal$ (resp.\ the torsion-free class cogenerated by $\Scal$) is
the smallest containing $\Scal$.

\section{Torsion-simple objects}

Let $\Acal$ be an abelian category; from now on, we assume that:
\begin{equation}\label{eq:hyp-star}\tag{$\star$}
	\text{the torsion pairs generated and cogenerated by a single object exist.}
\end{equation}
In our examples, we will usually be under one of the hypotheses (1--4) of
Proposition~\ref{prop:1-4}.

For any object $x$ of $\Acal$, denote its set of torsion parts by:
\[\tbf_\Acal x:=\{\,w\subseteq x\mid \exists (\Tcal,\Fcal)\text{ torsion
pair in }\Acal\text{ with }w\in\Tcal, x/w\in\Fcal\,\}.\]
When $\Acal$ is clear from context, we omit it from the notation, writing $\tbf
x:=\tbf_\Acal x$. Notice that by considering the \emph{trivial torsion pairs}
$(0,\Acal), (\Acal,0)$ we have $\{0,x\}\subseteq\tbf x$. The main concern of
this paper are the following objects:

\begin{dfn}
	A non-zero object $x$ of $\Acal$ is \newterm{torsion-simple} if $\tbf
	x=\{0,x\}$. Equivalently, if it is either torsion or torsion-free with
	respect to every torsion pair $(\Tcal,\Fcal)$ (which we can write:
	$x\in\Tcal\cup\Fcal$ for every torsion pair $(\Tcal,\Fcal)$).
\end{dfn}

This definition involves all torsion pairs of $\Acal$; since we assume to be
able to generate torsion pairs, we have the following `intrinsic' criterion to
check whether $x$ is torsion-simple.

\begin{lemma}\label{lemma:torsion-characterisation}
	Under the assumption~\eqref{eq:hyp-star}, for every object $x$ of $\Acal$ we have:
	\[\tbf x=\{\, w\subseteq x \mid \Acal(w,x/w)=0\,\}.\]
\end{lemma}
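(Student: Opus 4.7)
The plan is to prove both inclusions, with the nontrivial direction following immediately from the ability to generate a torsion pair from a single object, guaranteed by hypothesis~\eqref{eq:hyp-star}.

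For the inclusion ($\subseteq$), if $w\in\tbf x$, pick a torsion pair $(\Tcal,\Fcal)$ witnessing this, so that $w\in\Tcal$ and $x/w\in\Fcal$. Then the orthogonality axiom (T1) applied to these two objects gives $\Acal(w,x/w)=0$, so $w$ lies in the right-hand set. This step is immediate from the definition of a torsion pair.

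For the inclusion ($\supseteq$), the idea is to use the torsion pair generated by $w$ itself, which exists by~\eqref{eq:hyp-star}. Concretely, set $(\Tcal,\Fcal):=({}^\bot(\{w\}^\bot),\{w\}^\bot)$. Then $w\in\Tcal$ by construction, and the condition $x/w\in\Fcal=\{w\}^\bot$ is precisely the assumption $\Acal(w,x/w)=0$. Hence $w\in\tbf x$, as required. (Equivalently, one could use the torsion pair cogenerated by $x/w$: the same hypothesis ensures that $w$ lies in its torsion class.)

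There is no real obstacle here: the statement is essentially a reformulation of the definition of $\tbf x$, and the only leverage required is the existence of the two `extremal' torsion pairs provided by~\eqref{eq:hyp-star}. The main point of the lemma is conceptual rather than technical: it records that, under the standing hypothesis, the set $\tbf x$ can be described intrinsically in terms of $x$ alone, without reference to any specific torsion pair. This will be used in the subsequent sections to test torsion-simplicity by running over subobjects $w\subseteq x$ and checking the single condition $\Acal(w,x/w)\neq 0$.
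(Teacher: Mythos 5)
Your proof is correct and follows exactly the paper's argument: the inclusion ($\subseteq$) is immediate from (T1), and for ($\supseteq$) the paper likewise observes that a subobject $w$ with $\Acal(w,x/w)=0$ is the torsion part of $x$ with respect to the torsion pair generated by $w$, which exists by~\eqref{eq:hyp-star}.
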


\begin{proof}
	The inclusion ($\subseteq$) is obvious from (T1). For the converse, any
	$w\subseteq x$ such that $\Acal(w,x/w)=0$ is the torsion part of $x$
	with respect to the torsion pair generated by $w$.
\end{proof}

\begin{cor}
	An object $x$ of $\Acal$ is torsion-simple if and only if for every
	$w\subseteq x$ we have:
	\[\Acal(w,x/w)=0\;\;\Rightarrow\;\; w=0\text{ or }w=x.\]
\end{cor}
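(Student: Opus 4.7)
The plan is to unwind definitions, using Lemma~\ref{lemma:torsion-characterisation} as the main input. By definition, a non-zero $x$ is torsion-simple exactly when $\tbf x = \{0, x\}$, and the preceding lemma identifies $\tbf x$ with $\{w \subseteq x \mid \Acal(w, x/w) = 0\}$. Both $w = 0$ and $w = x$ automatically belong to this set (since $\Acal(0, x/0) = 0$ and $\Acal(x, x/x) = \Acal(x, 0) = 0$), so the only content in the equality $\tbf x = \{0, x\}$ is that no other subobject verifies $\Acal(w, x/w) = 0$; this is precisely the implication in the statement.

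Concretely, for ($\Rightarrow$) I would take $w \subseteq x$ with $\Acal(w, x/w) = 0$; the lemma then places $w$ in $\tbf x = \{0, x\}$, forcing $w = 0$ or $w = x$. For ($\Leftarrow$), I would take any $w \in \tbf x$, use the lemma to get $\Acal(w, x/w) = 0$, and then invoke the hypothesis to conclude $w \in \{0, x\}$; together with the trivial inclusion $\{0, x\} \subseteq \tbf x$, this gives $\tbf x = \{0, x\}$. There is no real obstacle here: the corollary is a purely formal restatement of Lemma~\ref{lemma:torsion-characterisation} in the special case $\tbf x = \{0, x\}$.
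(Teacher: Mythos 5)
Your proof is correct and is exactly the intended argument: the paper states this corollary without a separate proof precisely because it is the formal restatement of Lemma~\ref{lemma:torsion-characterisation} combined with the definition $\tbf x=\{0,x\}$, which is what you carry out. (Your aside that $0$ and $x$ always lie in $\tbf x$, and your attention to the non-zero hypothesis in the definition, match the paper's conventions.)
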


Before considering concrete examples, we present a few results that will aid 
our study.

\subsection{Constructing torsion-simple objects}

The following is a way to construct new torsion-simple objects from
known ones.
Recall that the class\footnote{\label{foot:stanley}The same argument used by Stanley
\cite[\S8]{stan-10} to prove that there is proper class of $t$-structures in the
derived category $\mathsf{D}(\Zbb)$ also works to prove that there is a proper
class of torsion pairs in $\Mod \Zbb$.} $\tors\Acal$ of torsion pairs
of $\Acal$ can be partially ordered by inclusion of the torsion classes. Any
torsion-simple object $x$ then induces a partition of $\tors\Acal$ into an
upper-class, consisting of the torsion pairs for which $x$ is torsion, and a
complementary lower-class, consisting of the torsion pairs for which $x$ is
torsion-free.

\begin{dfn}
	The \newterm{type} of a torsion-simple object $x$ of $\Acal$ is the
	subclass of $\tors\Acal$ consisting of torsion pairs for which $x$ is
	torsion.
\end{dfn}

\begin{prop}\label{prop:type}
	The class of torsion-simple objects of a given type is closed under
	extensions and images of morphisms; if $\Acal$ is (AB5), it is also closed under
	coproducts.
\end{prop}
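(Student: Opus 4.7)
The plan is to check each of the three closure properties separately, in each case by verifying directly from the definition that an appropriate construction out of torsion-simple objects of a fixed type $\tau$ lies in every $\Tcal$ (for $(\Tcal,\Fcal)\in\tau$) and in every $\Fcal$ (for $(\Tcal,\Fcal)\notin\tau$). The general closure properties of torsion and torsion-free classes recalled in the paragraph after (i)--(ii) of \S1.3 should do essentially all the work.

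For \emph{extensions}, suppose $0\to x\to y\to z\to 0$ is exact with $x,z$ torsion-simple of type $\tau$. Pick any torsion pair $(\Tcal,\Fcal)$: if it lies in $\tau$ then $x,z\in\Tcal$, so $y\in\Tcal$ because $\Tcal$ is extension-closed; if it does not lie in $\tau$ then $x,z\in\Fcal$, and $y\in\Fcal$ for the same reason. Since $y\ne 0$ (as $x\ne 0$ embeds in it), $y$ is torsion-simple and of type $\tau$. For \emph{images}, let $f\colon x\to y$ be a morphism between torsion-simple objects of type $\tau$, and assume $\im f\ne 0$. A torsion pair in $\tau$ puts $x\in\Tcal$, so $\im f\in\Tcal$ by closure under quotients; a torsion pair not in $\tau$ puts $y\in\Fcal$, so $\im f\in\Fcal$ by closure under subobjects. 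Hence $\im f$ is torsion-simple of type $\tau$.

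For \emph{coproducts} assuming (AB5), let $(x_i)_{i\in J}$ be a non-empty family of torsion-simple objects of type $\tau$, and set $x:=\bigoplus_{i\in J} x_i$. If $(\Tcal,\Fcal)\in\tau$, each $x_i\in\Tcal$, and hence $x\in\Tcal$ because torsion classes are closed under coproducts. The delicate case is $(\Tcal,\Fcal)\notin\tau$: each $x_i\in\Fcal$, and we must conclude $x\in\Fcal$, even though torsion-free classes need not be closed under coproducts in general. Let $w:=tx\subseteq x$ be the torsion part of $x$; we want $w=0$. For every \emph{finite} $I\subseteq J$, the finite coproduct $\bigoplus_{i\in I}x_i$ is an iterated extension of torsion-free objects, hence itself in $\Fcal$; since $w\cap\bigoplus_{i\in I}x_i$ is a subobject of $w\in\Tcal$ and of $\bigoplus_{i\in I}x_i\in\Fcal$, it must vanish. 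Writing $x$ as the filtered union $\varinjlim_{I}\bigoplus_{i\in I}x_i$ indexed by the finite subsets of $J$, the (AB5) hypothesis allows intersections to commute with this filtered colimit of subobjects, giving
\[
w \;=\; w\cap x \;=\; w\cap\varinjlim_{I}\bigoplus_{i\in I}x_i \;=\; \varinjlim_{I}\Bigl(w\cap\bigoplus_{i\in I}x_i\Bigr) \;=\; 0.
\]
Thus $x\in\Fcal$, as required; since $x\ne 0$, it is torsion-simple of type $\tau$.

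The main obstacle is the coproduct step: it is the only place where one cannot simply invoke the standard closure properties of $\Tcal$ and $\Fcal$, and it is precisely there that the (AB5) hypothesis is used, in the form of the distributivity of subobject-intersection over filtered unions. Everything else is a direct application of the definitions.
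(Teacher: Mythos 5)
Your treatment of extensions and images is correct and coincides with the paper's argument. The flaw is in the coproduct step: you assert that $w\cap\bigoplus_{i\in I}x_i$ must vanish ``since it is a subobject of $w\in\Tcal$ and of $\bigoplus_{i\in I}x_i\in\Fcal$''. Torsion classes are \emph{not} closed under subobjects (that closure is exactly the hereditary condition), so a subobject of $w$ has no reason to be torsion; your two observations only show that the intersection is torsion-free, which gives nothing. As written, the crucial claim $w\cap\bigoplus_{i\in I}x_i=0$ is therefore unjustified.

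The claim is nevertheless true and your argument is locally repairable: let $\pi_I\colon x\to\bigoplus_{i\in I}x_i$ be the projection onto the finite summand. Then $\pi_I(w)$ is a quotient of the torsion object $w$, hence torsion, and a subobject of $\bigoplus_{i\in I}x_i\in\Fcal$, hence torsion-free; so $\pi_I(w)=0$, i.e.\ $w\subseteq\ker\pi_I=\bigoplus_{i\notin I}x_i$, which forces $w\cap\bigoplus_{i\in I}x_i=0$. With this fix your (AB5) step $w=\varinjlim_I\bigl(w\cap\bigoplus_{i\in I}x_i\bigr)=0$ goes through, and it has the mild advantage of not needing products to exist. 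The paper's own route is shorter: under (AB5) the canonical morphism $\bigoplus_i x_i\to\prod_i x_i$ is a monomorphism, and since $\Fcal$ is closed under existing products and under subobjects, it is closed under coproducts; the coproduct case is then handled exactly like the extension case.
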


\begin{proof}
	Closure under extensions follows from the fact that both torsion and
	torsion-free classes enjoy the same property. Similarly, if $\Acal$ is
	(AB5), coproducts are subobjects of the corresponding products; hence
	torsion-free classes are also closed under coproducts, and one argues in the
	same way. The image of a morphism bewteen torsion-simple objects of the same
	type is at the same time a quotient of the domain and a subobject of the
	codomain, so it is torsion (resp.\ torsion-free) when both the domain and codomain
	are.
\end{proof}

In \S\ref{subsec:commutative} we will see a special case in which these classes are also
closed under subobjects, because all torsion pairs are hereditary.

\subsection{Testing for torsion-simplicity}

In this subsection we collect some results which help in testing whether an
object $x$ of $\Acal$ is torsion-simple. We have two themes:
showing that we can carry the test inside certain
subcategories containing $x$, rather than in the whole $\Acal$;
and trying to reduce the set of subobjects of $x$ which can be torsion parts, by
finding conditions a torsion part must satisfy.
We start with an instance of the first.

Recall that a \emph{Serre subcategory} of an abelian category $\Acal$ is a subcategory
$\Scal\subseteq\Acal$ closed under subobjects, extensions and quotients. Such
$\Scal$ is itself abelian, since it is closed under kernels and cokernels.

\begin{prop}\label{prop:serre-restriction}
	Let $\Acal$ be an abelian category, and let $\Scal\subseteq\Acal$ be a Serre
	subcategory. For $x\in\Scal$, we have that:
	\begin{enumerate}
		\item $\tbf_\Acal x\subseteq\tbf_\Scal x$; in particular, if $x$ is
			torsion-simple in $\Scal$, it is torsion-simple in $\Acal$;
		\item if $\Acal$ satisfies~\eqref{eq:hyp-star}, $\tbf_\Acal x=\tbf_\Scal x$,
			and we have the converse of the implication in (1).
	\end{enumerate}
\end{prop}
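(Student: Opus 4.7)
The plan is to exploit two complementary descriptions of $\tbf x$: in the ambient category, a torsion part is realised as the torsion half of a torsion sequence for some torsion pair; while when \eqref{eq:hyp-star} holds, Lemma~\ref{lemma:torsion-characterisation} gives the intrinsic characterisation as any subobject $w\subseteq x$ with $\Acal(w,x/w)=0$. The two parts of the proposition use one description each.

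For part~(1), I would start from $w\in\tbf_\Acal x$ realised via a torsion pair $(\Tcal,\Fcal)$ of $\Acal$, and \emph{restrict} it to $\Scal$ by setting $\Tcal':=\Tcal\cap\Scal$ and $\Fcal':=\Fcal\cap\Scal$. The orthogonality (T1) for this pair is inherited from $\Acal$ because $\Scal\subseteq\Acal$ is full. To verify (T2), I would take an arbitrary $y\in\Scal$, look at its $\Acal$-torsion sequence $0\to ty\to y\to fy\to 0$, and observe that since $\Scal$ is a Serre subcategory, both $ty$ (a subobject of $y$) and $fy$ (a quotient of $y$) lie in $\Scal$, hence in $\Tcal'$ and $\Fcal'$ respectively. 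Applying this to $x$ itself, the decomposition $0\to w\to x\to x/w\to 0$ shows $w\in\Tcal'$ and $x/w\in\Fcal'$, so $w\in\tbf_\Scal x$. The ``in particular'' clause is then immediate, because the trivial inclusion $\{0,x\}\subseteq\tbf_\Acal x$ combined with $\tbf_\Acal x\subseteq\tbf_\Scal x=\{0,x\}$ forces $\tbf_\Acal x=\{0,x\}$.

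For part~(2), I would instead argue via the $\Hom$-criterion. Given $w\in\tbf_\Scal x$ with witnessing torsion pair $(\Tcal',\Fcal')$ in $\Scal$, condition~(T1) for this pair together with the fullness of $\Scal\subseteq\Acal$ yields $\Acal(w,x/w)=\Scal(w,x/w)=0$. Under hypothesis~\eqref{eq:hyp-star}, Lemma~\ref{lemma:torsion-characterisation} applied to $\Acal$ then produces a torsion pair of $\Acal$ realising $w$ as the torsion part of $x$, so $w\in\tbf_\Acal x$; combined with~(1), this gives $\tbf_\Acal x=\tbf_\Scal x$, from which the equivalence of torsion-simplicity in $\Acal$ and in $\Scal$ follows at once. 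Neither step is really obstructive; the only point demanding care is noticing that \emph{both} halves of the Serre axioms (closure under subobjects \emph{and} quotients) are needed in part~(1) to keep the $\Acal$-torsion sequence of every $y\in\Scal$ inside $\Scal$, and that the reliance on~\eqref{eq:hyp-star} in part~(2) is essential, since otherwise there is no mechanism to produce a torsion pair of $\Acal$ out of the mere vanishing of a single $\Hom$.
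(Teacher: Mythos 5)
Your proof is correct and follows essentially the same route as the paper: part~(1) by restricting an $\Acal$-torsion pair to $(\Tcal\cap\Scal,\Fcal\cap\Scal)$ using the Serre closure properties to keep torsion sequences inside $\Scal$, and part~(2) via the $\Hom$-vanishing criterion of Lemma~\ref{lemma:torsion-characterisation} together with fullness of $\Scal$ in $\Acal$. No gaps to report.
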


\begin{proof}
	(1) It is enough to observe that since $\Scal$ is closed under subobjects
	and quotients, any torsion pair $(\Tcal,\Fcal)$ in $\Acal$ restricts to a
	torsion pair $(\Tcal\cap\Scal,\Fcal\cap\Scal)$ in $\Scal$, whose torsion
	sequences are those of $(\Tcal,\Fcal)$ for the objects of $\Scal$.

	(2) Assuming that $\Acal$ satisfies~\eqref{eq:hyp-star}, using
	Lemma~\ref{lemma:torsion-characterisation} we have that
	\[\tbf_\Scal x\subseteq\{w\subseteq x\text{ in }\Scal\mid
	\Scal(w,x/w)=0\}\subseteq\{w\subseteq x\text{ in }\Acal\mid \Acal(w,x/w)=0\}=\tbf_\Acal x.\qedhere\]
\end{proof}

\begin{cor}\label{cor:restriction}
	Let $\Gcal$ be a locally noetherian Grothendieck category. An object
	$x\in\fp\Gcal$ is torsion-simple in $\Gcal$ if and only if it is
	torsion-simple in $\fp\Gcal$.
\end{cor}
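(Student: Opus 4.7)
The plan is to deduce this corollary from Proposition~\ref{prop:serre-restriction}(2), by verifying that $\fp\Gcal$ is a Serre subcategory of $\Gcal$ (which is the non-trivial hypothesis needed) and that both categories satisfy~\eqref{eq:hyp-star}.

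First I would recall the standard fact that in a locally noetherian Grothendieck category $\Gcal$, the subcategory $\fp\Gcal$ coincides with the subcategory of noetherian objects of $\Gcal$. This is the core identification: finitely generated subobjects of a finitely presented (= noetherian) object are themselves finitely presented, and every ascending chain of subobjects in a finitely presented object stabilises. From this identification, closure of $\fp\Gcal$ under subobjects, quotients and extensions in $\Gcal$ is immediate (noetherianity passes to subobjects and quotients, and an extension of two noetherian objects is noetherian). Hence $\fp\Gcal$ is a Serre subcategory of $\Gcal$.

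Next I would check the hypothesis~\eqref{eq:hyp-star} on both sides so that Proposition~\ref{prop:serre-restriction}(2) applies, giving the full biconditional. The ambient category $\Gcal$, being Grothendieck, has arbitrary (co)products and thus satisfies conditions~(2) and~(4) of Proposition~\ref{prop:1-4}, so in particular~\eqref{eq:hyp-star} holds in $\Gcal$. The subcategory $\fp\Gcal$ is noetherian (each of its objects is, as just recalled), so it satisfies condition~(1) of Proposition~\ref{prop:1-4}, hence also \eqref{eq:hyp-star}.

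With both hypotheses in place, Proposition~\ref{prop:serre-restriction}(2) yields $\tbf_{\fp\Gcal} x = \tbf_{\Gcal} x$ for any $x\in\fp\Gcal$, which translates exactly into the claimed equivalence of torsion-simplicity in the two categories. The only real subtlety to be careful about is the identification $\fp\Gcal =$ noetherian objects, which underlies the verification that $\fp\Gcal$ is Serre; everything else is a direct appeal to the already-established results.
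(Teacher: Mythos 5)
Your proposal is correct and follows essentially the same route as the paper: apply Proposition~\ref{prop:serre-restriction}(2), noting that $\fp\Gcal\subseteq\Gcal$ is a Serre subcategory by noetherianity and that $\Gcal$ satisfies~\eqref{eq:hyp-star} because it has arbitrary (co)products. Your additional verification that $\fp\Gcal$ itself satisfies~\eqref{eq:hyp-star} is harmless but not needed, since Proposition~\ref{prop:serre-restriction}(2) only requires the hypothesis on the ambient category.
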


\begin{proof}
	Follows from the Proposition, as $\fp\Gcal\subseteq\Gcal$ is a Serre
	subcategory by the noetherianity assumption, and $\Gcal$
	satisfies~\eqref{eq:hyp-star} since it has arbitrary (co)products.
\end{proof}

Now, we state the first and most important condition a torsion part always
satisfies; despite being easily observed, it is extremely useful.

\begin{prop}\label{prop:stable}
	Let $\Acal$ be an abelian category, $x$ an object of $\Acal$ and $w$ its
	torsion part with respect to a torsion pair. Then $w$ is \emph{stable under
	endomorphisms}, meaning that any endomorphism of $x$ restricts to an
	endomorphism of $w$:
	\[\im(w\hookrightarrow x\overset{g}\to x)\subseteq w \quad\forall g\colon x\to x.\]
\end{prop}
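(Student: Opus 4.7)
The plan is to exploit the defining orthogonality (T1) of the torsion pair together with the universal property of the kernel $w \hookrightarrow x$ as the kernel of the canonical projection $x \twoheadrightarrow x/w$. The key observation is that any map from something in $\Tcal$ to something in $\Fcal$ is forced to be zero, and we want to turn this into a statement about a subobject of $x$ being contained in $w$.

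More concretely, let $(\Tcal,\Fcal)$ be the torsion pair for which $w$ is the torsion part of $x$, so by definition $w\in\Tcal$ and $x/w\in\Fcal$. Given an endomorphism $g\colon x\to x$, I would first consider the composition
\[
w\hookrightarrow x\overset{g}{\longrightarrow} x\twoheadrightarrow x/w.
\]
By (T1) applied to $w\in\Tcal$ and $x/w\in\Fcal$, this composition is zero. Next, I would factor $g$ restricted to $w$ through its image: write $w\twoheadrightarrow w'\hookrightarrow x$ where $w':=\im(w\hookrightarrow x\overset{g}\to x)$. Composing with $x\twoheadrightarrow x/w$, the morphism $w\twoheadrightarrow w'\to x/w$ is still zero; since $w\twoheadrightarrow w'$ is an epimorphism, the induced map $w'\to x/w$ is zero as well. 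By the universal property of $w=\Ker(x\twoheadrightarrow x/w)$, the inclusion $w'\hookrightarrow x$ factors through $w$, giving $w'\subseteq w$, which is exactly the stated conclusion.

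There is no real obstacle here: the argument is a short diagram chase using only (T1) and the fact that in an abelian category a subobject $w'\subseteq x$ whose composition with $x\twoheadrightarrow x/w$ vanishes is contained in $w$. The only mild care required is to work with the image factorisation of $g|_w$ rather than with $g|_w$ itself, so that the conclusion $w'\subseteq w$ is phrased as an honest inclusion of subobjects of $x$. No additional hypotheses on $\Acal$ (such as~\eqref{eq:hyp-star}) are needed, since the statement only uses the given torsion pair.
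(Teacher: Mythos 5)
Your argument is correct and is essentially the paper's own proof: both rest on the observation that the composition $w\hookrightarrow x\overset{g}\to x\twoheadrightarrow x/w$ vanishes by (T1), with your version merely spelling out the image factorisation and the kernel property of $w=\Ker(x\twoheadrightarrow x/w)$ that the paper leaves implicit.
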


\begin{proof}
	This follows immediately from the fact the the composition
	\[w\hookrightarrow x\overset{g}\to x\twoheadrightarrow x/w\]
	must vanish for every $g\colon x\to x$ by (T1). Notice that the restriction of $g$
	to $w\to w$ is nothing other then $t(g)$, where $t$ is the torsion radical.
\end{proof}

This fact and its use in characterising torsion parts are not new; for example,
Jans uses it to make the following observation:

\begin{cor}[{\cite[p.~4]{jans-65}}]
	Every torsion part of a ring $R$ in $\Mod R$ is a two-sided ideal.
\end{cor}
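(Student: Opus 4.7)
The plan is to apply Proposition~\ref{prop:stable} to the case $\Acal = \Mod R$ and $x = R$ (viewed as a right $R$-module over itself), and then translate the condition of being stable under endomorphisms into the condition of being closed under left multiplication.

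First I would observe that a torsion part $w$ of $R$ is, by construction, a subobject of $R$ in $\Mod R$, i.e.\ a right $R$-submodule of $R$; this is exactly the same thing as a right ideal of $R$. So the only thing that remains to check is that $w$ is closed under left multiplication by elements of $R$.

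To this end, I would invoke the classical identification $\End_{\Mod R}(R) \cong R^{\mathrm{op}}$ via left multiplication: for each $r\in R$, the map $g_r\colon R\to R$ defined by $g_r(x)=rx$ is a right $R$-module endomorphism of $R$, and every endomorphism arises this way. By Proposition~\ref{prop:stable}, the torsion part $w$ is stable under every such $g_r$, so $r\cdot w = g_r(w)\subseteq w$ for all $r\in R$. Combined with the fact that $w$ is already a right ideal, this shows that $w$ is a two-sided ideal.

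There is no real obstacle here: the whole content of the corollary is that right $R$-module endomorphisms of $R$ implement left multiplication, and Proposition~\ref{prop:stable} has already done the work of ensuring that a torsion part is preserved by every endomorphism. The proof is therefore essentially immediate once this dictionary is made explicit.
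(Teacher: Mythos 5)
Your argument is correct and is exactly the reasoning the paper intends: Proposition~\ref{prop:stable} gives stability of the torsion part under all right $R$-module endomorphisms of $R$, and the identification $\End_{\Mod R}(R)\cong R^{\mathrm{op}}$ via left multiplications turns this into closure under left multiplication, hence a two-sided ideal. Nothing is missing; this matches the paper's (essentially immediate) deduction.
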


\noindent In our terminology, this implies that a simple ring is torsion-simple in its
category of right- (or left-)modules. We will draw a couple more consequences
from this observation.

\begin{cor}\label{cor:division-ring}
	For a ring $R$ and a module $M$ in $\Mod R$, let $S:=\End_R(M)$. If $M$ is
	simple as a $S$-$R$-bimodule, it is torsion-simple in $\Mod R$. In particular, if $R$ is
	commutative, any $R$-algebra which is a division ring is torsion-simple in
	$\Mod R$.
\end{cor}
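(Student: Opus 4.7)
The plan is to deduce this directly from Proposition~\ref{prop:stable}, by observing that torsion parts of $M$ are automatically sub-$S$-$R$-bimodules. Concretely, let $W\subseteq M$ be a torsion part with respect to some torsion pair in $\Mod R$. By Proposition~\ref{prop:stable}, every $s\in S=\End_R(M)$ restricts to an endomorphism of $W$, which is exactly the statement that $s(W)\subseteq W$ for all $s\in S$. Combined with $W$ being an $R$-submodule of $M$, this says that $W$ is a sub-$S$-$R$-bimodule. The simplicity hypothesis then forces $W=0$ or $W=M$, so by (the corollary to) Lemma~\ref{lemma:torsion-characterisation}, $M$ is torsion-simple in $\Mod R$.

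For the second statement, let $R$ be commutative and let $A$ be an $R$-algebra which is a division ring. The algebra structure is a ring homomorphism $R\to A$ whose image lies in the centre of $A$; because elements of $R$ act centrally, left multiplication by any $a\in A$ defines an $R$-linear endomorphism of $A$, giving a ring map $A\to S=\End_R(A)$. Any sub-$S$-$R$-bimodule of $A$ is then stable under left multiplication by $A$, hence is a left ideal of $A$; since $A$ is a division ring, the only such ideals are $0$ and $A$. So $A$ is simple as an $S$-$R$-bimodule, and the first part applies.

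There is essentially no serious obstacle: the whole corollary is squeezed out of Proposition~\ref{prop:stable} together with the elementary identification of $\End_R(M)$-stable $R$-submodules with sub-$S$-$R$-bimodules. The only mild point to verify is that in the commutative case left multiplication by $A$ factors through $\End_R(A)$, which is immediate from the centrality of the image of $R$ in $A$.
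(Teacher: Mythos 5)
Your argument is correct and follows essentially the same route as the paper: Proposition~\ref{prop:stable} shows every torsion part of $M$ is stable under all $R$-linear endomorphisms, hence is a sub-$S$-$R$-bimodule and must be trivial by the simplicity hypothesis, while the division-ring case is handled by letting the algebra act on itself by left multiplication through $R$-linear maps. If anything, your phrasing of the second part via the ring map $A\to\End_R(A)$ is a little more careful than the paper's passing identification $D\simeq\End_R(D)$, but the underlying idea is identical.
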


\begin{proof}
	Using the canonical structure of left $S$-module of $M\in\Mod R$,
	Proposition~\ref{prop:stable} translates into the fact that a torsion part
	of $M$ is a not only a right $R$-submodule, but also a left $S$-submodule;
	the conclusion is immediate. For the second part, if $D$ is an $R$-algebra
	we have $D\simeq\End_R(D)$; and a division ring is simple as a
	module over itself.
\end{proof}

In particular, this applies to the residue fields at the prime ideals of a commutative
ring, as we will do in Example~\ref{ex:residues}.

The second consequence pertains to the other reduction theme.

\begin{cor}\label{cor:localisation}
	Let $R,S$ be rings, with $S$ commutative.
	Consider a ring epimorphism $\phi\colon R\to S$,
	so that $\Mod S\subseteq\Mod R$ as a full subcategory.
	Then, an $S$-module $M$
	is torsion-simple in $\Mod R$ if and only if it is torsion-simple in
	$\Mod S$.
\end{cor}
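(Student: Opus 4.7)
The plan is to show directly that $\tbf_{\Mod R}M=\tbf_{\Mod S}M$ as sets of subobjects of $M$, which will immediately yield the equivalence via the definition of torsion-simplicity. Both ambient categories have arbitrary (co)products, hence satisfy~\eqref{eq:hyp-star}, so Lemma~\ref{lemma:torsion-characterisation} is available and translates the problem into matching the two sets
\[
\tbf_{\Mod R}M=\{\,N\subseteq M\text{ in }\Mod R\mid \Hom_R(N,M/N)=0\,\},\quad
\tbf_{\Mod S}M=\{\,N\subseteq M\text{ in }\Mod S\mid \Hom_S(N,M/N)=0\,\}.
\]

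The crux is the inclusion $\tbf_{\Mod R}M\subseteq\tbf_{\Mod S}M$, and for this I would combine two ingredients. First, I would use the commutativity of $S$ together with Proposition~\ref{prop:stable}: since $S$ is commutative and $\phi$ factors every $R$-action through $S$, right multiplication $\rho_s\colon M\to M,\ m\mapsto ms$ by any $s\in S$ is $R$-linear (the image of $\phi$ is automatically central in the commutative ring $S$). Thus any $N\in\tbf_{\Mod R}M$, being stable under every endomorphism of $M$ in $\Mod R$ by Proposition~\ref{prop:stable}, is closed under the $S$-action, i.e.\ is already an $S$-submodule of $M$. Second, the assumption that $\phi$ is a ring epimorphism is precisely the statement that restriction of scalars $\Mod S\hookrightarrow\Mod R$ is fully faithful; hence for any such $N$, the groups $\Hom_R(N,M/N)$ and $\Hom_S(N,M/N)$ coincide. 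Combining these two observations places the candidate torsion part $N$ inside $\tbf_{\Mod S}M$.

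For the reverse inclusion $\tbf_{\Mod S}M\subseteq\tbf_{\Mod R}M$, every $S$-submodule of $M$ is in particular an $R$-submodule, and the full faithfulness of restriction of scalars again identifies the two $\Hom$-conditions, so Lemma~\ref{lemma:torsion-characterisation} gives the conclusion immediately.

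The main obstacle is the first step of the first inclusion: one must notice that the $R$-submodules that are candidate torsion parts are not arbitrary, but are constrained by endomorphism-stability (Proposition~\ref{prop:stable}), and that commutativity of $S$ upgrades this stability to $S$-linearity. Without commutativity of $S$, right multiplication by an element of $S$ need not be $R$-linear, so the argument would break down and the corollary could fail.
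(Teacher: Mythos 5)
Your proof is correct and follows essentially the same route as the paper: endomorphism-stability of torsion parts (Proposition~\ref{prop:stable}) plus commutativity of $S$ shows every candidate torsion part is an $S$-submodule, full faithfulness of $\Mod S\subseteq\Mod R$ identifies the relevant $\Hom$-groups, and Lemma~\ref{lemma:torsion-characterisation} then gives $\tbf_{\Mod R}M=\tbf_{\Mod S}M$. Your write-up merely makes explicit the two inclusions and the use of the epimorphism hypothesis, which the paper's shorter proof leaves implicit.
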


\begin{proof}
	Let $M$ be an $S$-module. Since $S$ is commutative, we have a
	natural morphism $S\to \End_R(M)$ given by $s\mapsto (-\cdot s)$. Then, an
	$R$-submodule of $M$ which is stable under endomorphism is automatically an
	$S$-submodule. Using the Proposition and the characterisation of
	Lemma~\ref{lemma:torsion-characterisation}, this shows that
	$\tbf_RM=\tbf_SM$, and we conclude immediately.
\end{proof}

We remark that, for this proof to work, the commutativity assumption on $S$ could
be substituted by the apparently weaker requirement that $\phi$ factors through
che center $Z(S)$ of $S$. However, this makes the inclusion $Z(S)\subseteq
S$ a ring epimorphism, forcing $S$ to be commutative, see
\cite[Prop.~1.3(b)]{stor-73}.

This commutativity requirement is really necessary to the Corollary, as shown by
the following counterexample. For more information on the ring epimorphisms from
an artin algebra, see \cite[Thm.~1.6.1]{iyam-03}.

\begin{ex}
	Consider the matrix ring $M_2(k)$, and let $\Lambda\subseteq M_2(k)$ be the
	ring of upper triangular matrices (which is the path algebra of the quiver
	$\Abb_2=(\bullet\to\bullet)$). This inclusion is a ring epimorphism, which does
	not satisfy the assumption of our corollary, as its target is not
	commutative.
	The category $\mod{M_2(k)}\simeq \mod k$ is identified with the
	additive closure $\Ccal\subseteq\mod \Lambda$ of the projective-injective
	indecomposable $\Lambda$-module $P_1$.
	In \S\ref{subsec:finite-length} we will classify torsion-simple objects of
	categories of finite length, such as $\Ccal$ and $\mod\Lambda$. As a result,
	we will see that $P_1$ is torsion-simple in $\Ccal$ (where it is actually simple)
	but not in $\mod \Lambda$ (where it has two distinct simple factors).
\end{ex}

We will apply Corollary~\ref{cor:localisation} in the study of
the injective envelopes of the cyclic modules $R/\pf$, where $R$ is a
commutative ring and $\pf$ a prime: these indeed carry a natural $R_\pf$-module
structure. See \S\ref{subsec:commutative}.

The next result is also in the vein of reducing the set of subobjects to test in
order to identify possible torsion parts. By $\len x$ we denote the
\emph{length} of an object $x$ of finite length: that is, the length of any
filtration of $x$ with simple factors.
\begin{prop}\label{prop:injective}
	Let $\Gcal$ be a locally finitely presented Grothendieck category. Let $E$
	be an object which is the direct union of its subobjects of finite length,
	and let $f\colon E\to E$ be an endomorphism such that:
	\begin{enumerate}
		\item $\ker f$ is essential in $E$, and
		\item $\ker f\subseteq \im f$.
	\end{enumerate}
	Then there is no proper torsion part $T\subsetneq E$ such that $\ker f\subseteq
	T\subseteq \im f$.
\end{prop}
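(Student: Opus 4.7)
The plan is to show that every torsion part $T$ of $E$ with $\ker f\subseteq T\subseteq \im f$ must equal $E$. Fix such a $T$, arising from a torsion pair $(\Tcal,\Fcal)$ with $T\in\Tcal$ and $E/T\in\Fcal$. The central idea is to use the stability of $T$ under $f$ (Proposition \ref{prop:stable}) to force $f$ to descend to an \emph{injective} endomorphism of $E/T$, and then combine this with the essentiality of $\ker f$ and the finite-length exhaustion of $E$ to squeeze $E/T$ down to zero.

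The first step identifies $f^{-1}(T)$ with $T$. Stability gives $f(T)\subseteq T$, so $T\subseteq f^{-1}(T)$. Because $T\subseteq \im f$, the restriction $f|_{f^{-1}(T)}\colon f^{-1}(T)\to T$ is surjective; because $\ker f\subseteq T$, its kernel is precisely $\ker f$. A third-isomorphism-theorem computation then produces a natural isomorphism
\[
f^{-1}(T)/T\;\cong\;T/f(T).
\]
The left-hand side is a subobject of $E/T\in\Fcal$, hence torsion-free; the right-hand side is a quotient of $T\in\Tcal$, hence torsion. Since $\Tcal\cap\Fcal=0$, both sides vanish, yielding $f^{-1}(T)=T=f(T)$. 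The endomorphism $\bar f\colon E/T\to E/T$ induced by $f$ therefore has trivial kernel $f^{-1}(T)/T=0$, and consequently every iterate $\bar f^n$ is injective.

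The second step uses essentiality. For any nonzero finite-length subobject $x\subseteq E$, the intersection $x\cap\ker f$ is nonzero by essentiality, so $f|_x$ has nonzero kernel and $\len(f(x))<\len(x)$. Iterating, the lengths of $x,f(x),f^2(x),\dots$ strictly decrease as long as the objects remain nonzero, so $f^n(x)=0$ for some $n$. Passing to $E/T$, the image $\bar x$ of $x$ satisfies $\bar f^n(\bar x)=0$, and the injectivity of $\bar f^n$ forces $\bar x=0$, i.e.\ $x\subseteq T$. Since $E$ is the directed union of its finite-length subobjects, $E\subseteq T$, hence $T=E$, which precludes $T$ from being proper.

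The cleverest step, and the main obstacle, is spotting the isomorphism $f^{-1}(T)/T\cong T/f(T)$, which places a single object simultaneously in the torsion and torsion-free classes. Once that collapse is in place, verifying surjectivity of $f|_{f^{-1}(T)}\to T$, computing the induced kernel on $E/T$, and running the length-reduction on finite-length subobjects are straightforward; a minor care point is checking that $\ker f\subseteq T$ really makes the kernel of $f|_{f^{-1}(T)}$ ``disappear'' upon passing to the quotient, so that the critical isomorphism above does hold on the nose.
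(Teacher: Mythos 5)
Your proof is correct, and all the delicate points check out: since $T\subseteq\im f$ one has $f(f^{-1}(T))=T\cap\im f=T$, the kernel of the restriction is $\ker f$, and because $\ker f\subseteq T$ one gets $f^{-1}(f(T))=T+\ker f=T$, so the isomorphism $f^{-1}(T)/T\simeq T/f(T)$ holds; the orthogonality $\Tcal\cap\Fcal=0$ then forces $f(T)=T=f^{-1}(T)$, the induced endomorphism of $E/T$ is monic, and the essentiality-plus-length argument shows $f$ is locally nilpotent on finite-length subobjects, so every such subobject lands in $T$ and the direct-union hypothesis gives $T=E$.

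This is, however, a genuinely different organisation from the paper's argument. The paper does not invoke Proposition~\ref{prop:stable}: it picks a finite-length subobject $F$ of minimal length with $F\nsubseteq T$ (possible because $T\subsetneq E$ and $E$ is the union of its finite-length subobjects), uses essentiality to see $\len(f(F))<\len(F)$ and hence $f(F)\subseteq T$, deduces $T\nsubseteq f(T)$ (otherwise $F\subseteq T+\ker f=T$), and then exhibits an explicit non-zero morphism $T\subseteq\im f\simeq E/\ker f\to E/T$, whose kernel is $f(T)$, contradicting $\Acal(T,E/T)=0$ directly. You instead let the torsion pair act twice --- stability gives $f(T)\subseteq T$, orthogonality kills the subquotient $f^{-1}(T)/T\simeq T/f(T)$ --- and then derive the contradiction from ``$\bar f$ injective on $E/T$ versus $f$ locally nilpotent on finite-length subobjects,'' concluding $T=E$ rather than producing a non-zero test morphism. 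Your route is arguably more structural (it isolates the $f$-invariance of any trapped torsion part as an intermediate statement and shows outright that such a torsion part must be all of $E$), at the cost of relying on Proposition~\ref{prop:stable} and the third isomorphism theorem; the paper's minimal-counterexample argument is more hands-on and self-contained, needing only the Hom-orthogonality (T1). Both use the two hypotheses --- essentiality of $\ker f$ and exhaustion by finite-length subobjects --- in the same essential way, just at different points of the proof.
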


\begin{proof}
	Let $T\subsetneq E$ be such that $\ker f\subseteq T\subseteq \im f$.
	We start by showing that $T\nsubseteq f(T)$.
	Let $0\neq F\subseteq E$ be a
	subobject of finite length $\len(F)$. By (1), we have $F\cap\ker f\neq 0$.
	This means that $f(F)$ is a proper quotient of $F$, and
	therefore $\len(f(F))<\len(F)$. Now, since we
	assume $T\subsetneq E$, it cannot contain all the subobjects of $E$ of
	finite length, whose union is $E$; and since the length is a natural number, we can choose
	$F\neq 0$ to be of minimal length with respect to the property that
	$F\nsubseteq T$. By this choice, it follows that $f(F)\subseteq T$, since it
	has smaller length by our argument so far. Now, assume we had that
	$f(F)\subseteq f(T)$: this translates to $F+\ker f\subseteq T+\ker f=T$, and
	therefore in particular $F\subseteq T$, which contradicts our choice of $F$.
	We conclude as wanted that $T\nsubseteq f(T)$, as the first contains $f(F)$ but the
	latter does not.

	Now, consider the canonical isomorphism $\im f\simeq E/\ker f$, and its
	composition with the projection $E/\ker f\to E/T$. Since the kernel of the
	projection is $T/\ker f$, it can be easily checked
	that the kernel of the composition is $f(T)$. As such, it does not contain
	$T$, so the composition $T\subseteq \im f\simeq E/\ker f\to E/T$ is
	non-zero. We deduce that $T$ cannot be a torsion part of $E$.
\end{proof}

Examples of objects $E$ to which this proposition applies are the objects of a
locally finite Grothendieck category having a simple socle, and the injective
envelope of the residue field of a commutative local ring
\cite[\S31]{lam-99}: see Lemma~\ref{lemma:PID} later on.

\section{Examples}

In this section we investigate torsion-simple objects in various contexts,
giving examples and some interesting counterexamples.

\subsection{Torsion-simple rings}
We start by recalling an older paper of Bican, Jambor, Kepka and
N\v{e}mec, in which they study rings which are torsion-simple as
modules over themselves, under the name
of \emph{$R$-rings}. We review two of their results:

\begin{prop}[{\cite[Prop.~1.13]{bica-jamb-kepk-neme-73}}]
	Assume a ring $R$ has no non-trivial idempotent two-sided ideals. Then a
	non-trivial projective module is never torsion for any non-trivial torsion pair.
\end{prop}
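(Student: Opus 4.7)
The plan is to argue by contradiction, exploiting the interaction between the torsion radical and the \emph{trace ideal} of a projective module. Suppose $P$ is a non-zero projective module belonging to the torsion class $\Tcal$ of some torsion pair $(\Tcal,\Fcal)$, and set $I:=tR$, the torsion part of the right $R$-module $R$. By the Corollary of Jans cited just before the statement, $I$ is a two-sided ideal of $R$.

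First I would use the non-triviality assumption to show $I\subsetneq R$. If $I=R$ then $R\in\Tcal$, and since $\Tcal$ is closed under quotients and coproducts every module lies in $\Tcal$, forcing $\Fcal=0$ and contradicting non-triviality. Next, since $P\in\Tcal$ and $R/I\in\Fcal$, we have $\Hom_R(P,R/I)=0$, so every morphism $P\to R$ factors through $I\hookrightarrow R$. Therefore the \textbf{trace ideal}
\[
	\mathrm{tr}(P):=\sum_{f\in\Hom_R(P,R)}f(P)
\]
is contained in $I$; it is a two-sided ideal by the same argument as for $I$ (precomposition with endomorphisms of $P$ is absorbed on the left, and the right $R$-module structure gives absorption on the right).

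The crucial input is the \emph{dual basis lemma}: since $P$ is projective, there exist families $\{p_\alpha\}\subseteq P$ and $\{f_\alpha\}\subseteq\Hom_R(P,R)$ such that every $p\in P$ is written as $p=\sum_\alpha p_\alpha f_\alpha(p)$ with finite support. Applying any $f\in\Hom_R(P,R)$ yields $f(p)=\sum_\alpha f(p_\alpha)f_\alpha(p)\in\mathrm{tr}(P)^2$, proving $\mathrm{tr}(P)$ is idempotent; the same formula shows $P=P\cdot\mathrm{tr}(P)$, so $P\neq 0$ forces $\mathrm{tr}(P)\neq 0$.

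The conclusion is then immediate: $\mathrm{tr}(P)$ is a non-zero idempotent two-sided ideal contained in $I\subsetneq R$, which by the hypothesis on $R$ must equal $0$ or $R$, a contradiction. The main (very minor) obstacle is just recalling the dual-basis argument for the idempotency of the trace ideal and being careful that ``non-trivial torsion pair'' is being used in the sense $\Fcal\neq 0$, which is what yields $I\neq R$.
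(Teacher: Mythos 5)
Your argument is correct, and since the paper only quotes this result from Bican--Jambor--Kepka--N\v{e}mec without reproducing a proof, there is no in-paper argument to compare it with; your trace-ideal proof is a legitimate self-contained justification. Two small remarks. First, your parenthetical justification that $\mathrm{tr}(P)$ is two-sided is slightly misstated: left absorption comes from \emph{postcomposing} $f\in\Hom_R(P,R)$ with left multiplications $\lambda_r\colon R\to R$ (which are endomorphisms of $R_R$, exactly as in Jans's observation), giving $r\,f(P)=(\lambda_r\circ f)(P)\subseteq\mathrm{tr}(P)$; precomposition with endomorphisms of $P$ is irrelevant here. Second, the detour through $I=tR$ can be avoided: once the dual basis lemma gives that $\mathrm{tr}(P)$ is idempotent and non-zero, the hypothesis forces $\mathrm{tr}(P)=R$, so $R$ is an epimorphic image of a coproduct of copies of $P$; as $\Tcal$ is closed under coproducts and quotients, $R\in\Tcal$, hence $\Tcal=\Mod R$ and the torsion pair is trivial. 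This variant proves the contrapositive directly and only uses the closure properties of $\Tcal$, whereas your route through $tR$ has the merit of tying in with the corollary of Jans stated just before and of making explicit where non-triviality (in the form $\Fcal\neq 0$, i.e.\ $tR\neq R$) enters. Either way the proof stands.
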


Under the hypothesis of this proposition, we deduce that a non-zero projective module
is torsion-simple if and only if it is torsion-free for every non-trivial
torsion pair.

The following is one of the main results of the cited paper:

\begin{thm}[{\cite[Thm.~2.8]{bica-jamb-kepk-neme-73}}]
	For a ring $R$, denote by $M_n(R)$ the ring of $n\times n$ matrices over
	$R$. The following are equivalent in $\Mod R\simeq \Mod M_n(R)$:
	\begin{enumerate}[label=(\roman*)]
		\item $M_n(R)$ is torsion-simple for some $n\geq 1$;
		\item $M_n(R)$ is torsion-simple for every $n\geq 1$.
	\end{enumerate}
\end{thm}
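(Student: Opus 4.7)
The plan is to reduce the statement, via the standard Morita equivalence $\Mod M_n(R)\simeq\Mod R$, to an assertion about the modules $R^n$ in $\Mod R$, and then to prove that assertion using Proposition~\ref{prop:type} together with the elementary closure properties of torsion and torsion-free classes.

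The first step is to note that torsion-simplicity is a purely categorical notion: the set $\tbf x$ depends only on the subobject lattice and on Hom-sets (Lemma~\ref{lemma:torsion-characterisation}), and is therefore preserved by any equivalence of abelian categories. Under the standard Morita equivalence $\Mod M_n(R)\to\Mod R$ realised by $N\mapsto Ne_{11}$ (with $e_{11}$ the matrix idempotent at the first diagonal entry), the right regular module $M_n(R)$ is sent to $M_n(R)e_{11}\cong R^n$ as a right $R$-module. Hence the equivalence of (i) and (ii) translates into the claim that $R^n$ is torsion-simple in $\Mod R$ for some $n\geq 1$ if and only if it is for every $n\geq 1$.

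I would establish both of these conditions to be equivalent to the case $n=1$, i.e.\ to the assertion that $R$ itself is torsion-simple in $\Mod R$. The implication ``$R$ torsion-simple $\Rightarrow$ $R^n$ torsion-simple for every $n$'' is immediate from Proposition~\ref{prop:type}: since $\Mod R$ is (AB5) and $R^n$ is the $n$-fold coproduct of $R$, it is torsion-simple of the same type as $R$. Conversely, assume $R^n$ is torsion-simple for some $n\geq 1$ and let $(\Tcal,\Fcal)$ be any torsion pair in $\Mod R$; by hypothesis $R^n\in\Tcal\cup\Fcal$. If $R^n\in\Tcal$, then $R$ is a quotient of $R^n$ via any coordinate projection, so $R\in\Tcal$; if $R^n\in\Fcal$, then $R$ is a subobject of $R^n$ via any coordinate inclusion, so $R\in\Fcal$. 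In either case $R\in\Tcal\cup\Fcal$, so $R$ is torsion-simple.

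There is no substantial obstacle to this plan: the real content lies in identifying the right regular module $M_n(R)\in\Mod M_n(R)$ with $R^n\in\Mod R$ through Morita, after which everything follows from Proposition~\ref{prop:type} and the standard closure properties of torsion (resp.\ torsion-free) classes under quotients (resp.\ subobjects).
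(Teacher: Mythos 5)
Your proof is correct. The Morita identification is right: under $N\mapsto Ne_{11}$ the regular module $M_n(R)$ goes to $M_n(R)e_{11}\cong R^n$ in $\Mod R$, torsion-simplicity is visibly invariant under equivalences, and your two-step reduction works: $R$ torsion-simple implies $R^n$ torsion-simple by the coproduct part of Proposition~\ref{prop:type} (legitimate, since $\Mod R$ is (AB5) and all copies of $R$ have the same type), and conversely if $R^n\in\Tcal\cup\Fcal$ for a torsion pair $(\Tcal,\Fcal)$, then $R\in\Tcal$ as a quotient, respectively $R\in\Fcal$ as a subobject, so $R$ is torsion-simple. The paper does not prove this cited theorem directly; it subsumes it into the more general Theorem~\ref{thm:morita}, whose argument is that a ring is torsion-simple if and only if every object of $\Proj R=\Add R$ is torsion-simple (using closure of torsion-simple objects of a given type under coproducts and under direct summands, the latter being images of endomorphisms), a purely categorical condition that transports along any Morita equivalence. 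Your route is essentially the same in spirit but slightly more elementary and concrete: you only need the coproduct half of Proposition~\ref{prop:type}, replacing the direct-summand argument by the basic closure of torsion classes under quotients and of torsion-free classes under subobjects applied to the coordinate maps $R^n\twoheadrightarrow R$ and $R\hookrightarrow R^n$; the price is that it is tailored to the matrix-ring statement, whereas the paper's formulation yields Morita invariance in full generality.
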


\noindent We are able to vastly generalise this theorem:

\begin{thm}\label{thm:morita}
	Being torsion-simple, for a ring, is a Morita-invariant property.
\end{thm}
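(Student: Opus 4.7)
The plan is to exploit the standard description of a Morita equivalence $F\colon\Mod R\to\Mod S$ as a functor $F=\Hom_R(P,-)$ for some progenerator $P$ of $\Mod R$, with $S\cong\End_R(P)$ and $F(P)\cong S$ as an $S$-module. Since any equivalence of abelian categories preserves the formation of extensions, subobjects and quotients, it sends torsion pairs to torsion pairs and thereby induces a bijection $\tors(\Mod R)\simeq\tors(\Mod S)$ respecting the torsion and torsion-free sides; consequently, torsion-simplicity is transported through $F$. Applied to $P$, this reduces the theorem to the intrinsic assertion inside a single category of modules: $R$ is torsion-simple in $\Mod R$ if and only if every progenerator of $\Mod R$ is. By the symmetry of Morita equivalence, one implication suffices.

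I would next isolate two elementary closure properties of the class of torsion-simple objects of a fixed type $\tau\subseteq\tors(\Mod R)$. The first---closure under coproducts---is exactly Proposition~\ref{prop:type} applied to $\Mod R$, which is (AB5). The second---closure under non-zero direct summands---is a quick direct check: if $y=x\oplus z$ is torsion-simple of type $\tau$, then for any torsion pair $(\Tcal,\Fcal)$ either $y\in\Tcal$, whence $x\in\Tcal$ as a quotient of $y$, or $y\in\Fcal$, whence $x\in\Fcal$ as a subobject of $y$; thus $x$ is torsion-simple of the same type $\tau$.

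Combining these, the desired implication is immediate: if $R$ is torsion-simple in $\Mod R$, then so is $R^n$ for every $n\ge 1$, and since every progenerator $P$ is a direct summand of some $R^n$, it follows that $P$ itself is torsion-simple. I do not anticipate substantial technical obstacles in this plan. The point that makes the theorem non-tautological---and the only place where genuine work is done---is the observation that Morita theory identifies $S$ not with $R$, but with an arbitrary progenerator $P$ of $\Mod R$; the content of the proof thus consists in bridging this gap, which is accomplished by the coproduct and direct-summand closures above.
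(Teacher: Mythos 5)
Your proof is correct and follows essentially the same route as the paper: both rest on Proposition~\ref{prop:type} (closure of torsion-simple objects of a fixed type under coproducts and direct summands) to pass from $R$ to $\Add R$, and then transport the conclusion along the equivalence. The only cosmetic difference is that you phrase the reduction via a progenerator $P$ with $\Hom_R(P,-)$ identifying $P$ with $S$, while the paper states the invariant as ``every projective module is torsion-simple'' and invokes its categorical nature.
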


\begin{proof}
	By Proposition~\ref{prop:type}, in a Grothendieck category the class of
	torsion-simple objects of a given type is closed under coproducts and direct
	summands (which are images of endomorphisms). Therefore, a ring $R$ is
	torsion-simple in $\Mod R$ if and only if every projective $R$-module is
	torsion-simple, as $\Proj R=\Add R$. This is a categorical property, and
	therefore it carries over along Morita equivalence.
\end{proof}

\subsection{Torsion-simple objects filtered by simples}\label{subsec:finite-length}

We now focus on objects which admit a filtration with simple factors: at first
finite, and later infinite. This will in particular be a complete result for
categories of finite length, such as $\mod A$ for an artinian ring $A$, and
their locally finite Grothendieck completion, such as $\Mod A$.

We apologise to the reader for the first example, which we had to write, sooner
or later:
\begin{ex}
	Simple objects are torsion-simple.
\end{ex}

\begin{prop}\label{prop:finite-length}
	Let $\Acal$ be a noetherian category. An object of finite length $x$ of
	$\Acal$ is torsion-simple if and only if it has a unique simple factor, up
	to isomorphism.
\end{prop}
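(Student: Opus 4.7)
My plan is to handle the two implications separately, with $(\Leftarrow)$ being a quick consequence of the closure properties of torsion and torsion-free classes, and $(\Rightarrow)$ requiring a maximality argument enabled by noetherianity.

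For $(\Leftarrow)$, suppose every composition factor of $x$ is isomorphic to a single simple object $s$, so that a composition series $0=x_0\subset x_1\subset\cdots\subset x_n=x$ with $x_i/x_{i-1}\cong s$ exhibits $x$ as an iterated extension of copies of $s$. Given any torsion pair $(\Tcal,\Fcal)$, the simple $s$ is itself torsion-simple (its only subobjects being $0$ and $s$), hence lies in $\Tcal\cup\Fcal$. Since both $\Tcal$ and $\Fcal$ are closed under extensions, an induction along the series places $x$ in the same class as $s$, proving that $x$ is torsion-simple.

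For $(\Rightarrow)$, I argue the contrapositive: assuming $x$ has two non-isomorphic simple factors, I will exhibit a subobject $w$ with $0\neq w\subsetneq x$ and $\Acal(w,x/w)=0$. By Lemma~\ref{lemma:torsion-characterisation}, whose ``$\supseteq$'' direction only needs torsion pairs \emph{generated} by a single object (available in $\Acal$ via Proposition~\ref{prop:1-4}(1)), this shows $w\in\tbf x\setminus\{0,x\}$ and hence $x$ is not torsion-simple. To build $w$, fix a simple subobject $s\hookrightarrow x$ (which exists since $x$ is non-zero of finite length) and let $w$ be the largest subobject of $x$ whose composition factors are all isomorphic to $s$. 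Such a maximum exists because $x$ is noetherian and the property ``all composition factors $\cong s$'' is preserved under sums of subobjects, the sum being a quotient of the finite direct sum. Clearly $w\neq 0$ (since $s\subseteq w$) and $w\neq x$ (since $x$ has a composition factor not isomorphic to $s$ by assumption).

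The key step, which I expect to be the main obstacle, is the vanishing $\Acal(w,x/w)=0$, where the maximality of $w$ is used. If some $\phi\colon w\to x/w$ were non-zero, its image $y=\im\phi$ would be a non-zero subobject of $x/w$ all of whose composition factors are $\cong s$ (being a quotient of $w$). Pulling $y$ back along the projection $x\twoheadrightarrow x/w$ would then produce a subobject $w'\subseteq x$ strictly containing $w$, fitting in a short exact sequence $0\to w\to w'\to y\to 0$, whose composition factors remain all $\cong s$—contradicting maximality of $w$. Modulo this maximality/extension argument, everything else is a routine application of Jordan--Hölder together with the closure properties of torsion(-free) classes.
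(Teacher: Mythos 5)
Your proof is correct and follows essentially the same route as the paper: the $(\Leftarrow)$ direction is the same extension-closure argument, and in $(\Rightarrow)$ your maximal subobject $w$ filtered by $s$ is precisely the torsion part of $x$ with respect to the torsion pair $(\serre(s),\serre(s)^\bot)$ that the paper uses, so your maximality/pullback verification of $\Acal(w,x/w)=0$ simply unwinds by hand the paper's appeal to Proposition~\ref{prop:1-4} applied to the class $\serre(s)$ of $s$-filtered objects. The only difference is presentational: the paper concludes directly that a torsion-simple $x$ must lie in $\serre(s)$, whereas you argue the contrapositive via Lemma~\ref{lemma:torsion-characterisation}.
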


\begin{proof}
	The implication $(\Leftarrow)$ follows from the fact that simple objects are
	torsion-simple combined with Proposition~\ref{prop:type}, as the class of
	torsion-simple objects of a given type is closed under extensions. For the
	converse, let $s$ be any simple object, and consider the class $\serre(s)$ of
	objects admitting a finite filtration with factors isomorphic to $s$. It is
	closed under extensions and quotients, hence, assuming $\Acal$ noetherian, it
	is a torsion class. Any object of finite length $x$ has a simple subobject
	$s$, and therefore, if it is to be torsion-simple, it must belong to
	$\serre(s)$.
\end{proof}

When passing to the limit, we lose one implication. Recall that an object $x$
admits an \emph{infinite filtration} with simple factors if there is a direct
system of monomorphisms $(x_\alpha, x_\alpha\subseteq x_\beta\mid \alpha\leq
\beta\leq \lambda)$ indexed on an infinite ordinal $\lambda$ such that $x_0=0$,
$x_\lambda=x$, the factor $x_{\alpha+1}/x_\alpha$ is simple for every
$\alpha<\lambda$, and $x_{\alpha}=\varinjlim_{\beta<\alpha}x_\beta$ for any
limit ordinal $\alpha\leq \lambda$.

\begin{prop}\label{prop:locally-finite}
	Let $\Gcal$ be a locally noetherian Grothendieck category. If a torsion-simple
	object admits a (possibly infinite) filtration with simple factors, they
	must all be isomorphic.
\end{prop}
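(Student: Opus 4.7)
The plan is to exhibit a torsion pair whose torsion class $\Tcal$ contains $x$ and whose only simple objects (up to isomorphism) are copies of the first simple factor $s_0$ of the filtration; then every $s_\alpha$ must land in $\Tcal$, forcing $s_\alpha \cong s_0$.

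The starting observation is that, since $x_0 = 0$, the object $s_0 := x_1/x_0 = x_1$ is a simple subobject of $x$. I would then consider the torsion pair $(\Tcal, s_0^\bot)$ generated by $s_0$; this exists by Proposition~\ref{prop:1-4}(2), since $\Gcal$ is Grothendieck and hence has arbitrary coproducts. The key step is to verify that this torsion pair is \emph{hereditary}, i.e.\ that $\Tcal$ is closed under subobjects. Equivalently, I would show that $s_0^\bot$ is closed under injective envelopes: any nonzero map $s_0 \to E(y)$ is mono by simplicity of $s_0$; by essentiality of $y \subseteq E(y)$, its image meets $y$ nontrivially; by simplicity once more, the image coincides with $s_0$ and lies inside $y$, contradicting $y \in s_0^\bot$. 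Hence $E(y) \in s_0^\bot$ whenever $y \in s_0^\bot$, and standard theory gives heredity.

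With heredity in hand, the conclusion is quick. The inclusion $s_0 \hookrightarrow x$ yields $\Hom(s_0, x) \neq 0$, so $x \notin s_0^\bot$; the torsion part of $x$ is therefore nonzero, and torsion-simplicity forces $x \in \Tcal$. For any $\alpha < \lambda$, heredity gives $x_{\alpha+1} \in \Tcal$, and closure under quotients gives $s_\alpha = x_{\alpha+1}/x_\alpha \in \Tcal$. A nonzero object cannot lie in $\Tcal \cap s_0^\bot$, so $\Hom(s_0, s_\alpha) \neq 0$; since both source and target are simple, this map is an isomorphism, so $s_\alpha \cong s_0$.

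The main obstacle is the heredity of the torsion pair generated by a simple object: without it, we would only control simple \emph{quotients} of $x$, and would have no hold on the simple subquotients $x_{\alpha+1}/x_\alpha$ lying deep inside $x$. The injective-envelope argument sketched above is the standard way to obtain heredity in a Grothendieck category and in fact does not use the locally noetherian hypothesis; that hypothesis enters only insofar as it makes the notion of a transfinite filtration by simples a natural one to consider.
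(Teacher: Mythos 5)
Your argument is correct, and it follows the same overall strategy as the paper -- put $x$ into the torsion class generated by its simple subobject $x_1$ using torsion-simplicity, and use heredity of that torsion class to reach the deeper factors $x_{\alpha+1}/x_\alpha$ -- but the technical route is genuinely different. The paper realises the torsion class concretely as $\varinjlim\serre(s)$, invoking Crawley-Boevey's direct-limit machinery to see that this is a torsion class of $\Gcal$, obtaining heredity by adapting \cite[Prop.~VI.3.3]{sten-75}, and then arguing that a simple factor of an object of $\varinjlim\serre(s)$ lies in $\serre(s)=\varinjlim\serre(s)\cap\fp\Gcal$ and hence is isomorphic to $s$; this is where the locally noetherian hypothesis is used. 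You instead work abstractly with the torsion pair $({}^\bot(s_0^\bot),s_0^\bot)$, prove heredity via the injective-envelope criterion (the torsion-free class $s_0^\bot$ is closed under injective envelopes, which for a simple $s_0$ is exactly the essentiality argument you give; the equivalence with heredity is standard in a Grothendieck category, cf.\ \cite[Prop.~VI.3.2]{sten-75}), and finish by pure orthogonality: each $s_\alpha$ lies in the torsion class, hence admits a nonzero map from $s_0$, hence is isomorphic to it. Your final step via $\Tcal\cap s_0^\bot=0$ is cleaner than the paper's passage through $\fp\Gcal$, and your observation that the locally noetherian hypothesis is not needed is correct: your proof works in any Grothendieck category (coproducts give the generated torsion pair, injective envelopes give heredity), so you in fact obtain a slightly more general statement than the one in the paper.
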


\begin{proof}
	The proof is similar to the previous one, except that
	instead of $\serre(s)$ one considers its closure under direct limits
	$\varinjlim\serre(s)$, which is a torsion class of $\Gcal$ by work of
	Crawley-Boevey~\cite[\S4.4]{craw-94}. In fact, by closure properties, one sees that
	it is the torsion class generated by $s$ in $\Gcal$.
	Therefore, since $s$ is simple, $\varinjlim\serre(s)$ is also closed under
	subobjects, by a straightforward adaptation of \cite[Prop.~VI.3.3]{sten-75}.
	It follows that any simple factor of an object of $\varinjlim\serre(s)$ must
	belong to $\serre(s)=\varinjlim\serre(s)\cap\fp\Gcal$, and therefore be isomorphic to $s$ itself.

	Now, as in the previous proof, if $x$ admits a filtration $(x_\alpha\mid \alpha\leq\lambda)$
	with simple factors, it must belong to the torsion class
	$\varinjlim\serre(x_1)$, for which it has non-zero torsion part (containing
	$x_1\subseteq x$). It follows that all its simple factors are isomorphic to $x_1$.
\end{proof}

For an explicit counterexample to the converse implication, see
Example~\ref{ex:double-origin} and the accompanying Remark. It is worth
mentioning that the category of that counterexample is not of the form $\Mod A$
for an artinian ring $A$, leaving open the question of whether the converse of
the Proposition is true in such a category.

\begin{ex}\label{ex:path-algebra}
	Let $Q$ be a finite acyclic quiver and $k$ an algebraically
	closed field. The category $\Mod {kQ}$ is locally finite, so every module
	admits a (possibly infinite) filtration with simple factors. By
	Proposition~\ref{prop:locally-finite}, any torsion-simple module $M$ belongs to
	$\Scal:=\varinjlim\serre(S)$ for a simple module $S$; this class is a Serre
	subcategory of $\Mod kQ$ (see also the proof of
	Proposition~\ref{prop:locally-finite}). If $S$ corresponds to
	the vertex $i\in Q$, $\Scal$ consists of the modules concentrated in vertex
	$i$, when seen as representations of $Q$ over $k$-vectors spaces. By
	Proposition~\ref{prop:serre-restriction}, $M\in\Scal$ is torsion-simple in $\Mod kQ$
	if and only if so it is in $\Scal$; and in $\Scal\simeq \Mod k$ there are only
	two torsion classes, $0$ and $\Scal$. Therefore, every object of $\Scal$ is
	torsion-simple in ($\Scal$ and ) $\Mod kQ$. We conclude that the
	torsion-simple modules over $kQ$ are precisely the representations concentrated in a single vertex.
\end{ex}

\subsection{Modules over a commutative noetherian ring}\label{subsec:commutative}

We now focus on the categories $\mod R$ and $\Mod R$ for a commutative noetherian
ring $R$, first recalling some commutative algebra that comes into play.

By $\Spec R$ we denote the spectrum of prime ideals of $R$. This object has a
strong grip on $\Mod R$, through the following assignment. For an element $m$
of a module, denote by $\Ann(m)\leq R$ its annihilator ideal. Any module $M$ has a
set of \newterm{associated primes}:
\[\Ass M:=\{\,\pf\in\Spec R \mid \exists\, m\in M \text{ with }\Ann(m)=\pf\,\}.\]
This set is non-empty if $M\neq 0$, and finite if $M\in\mod R$
\cite[Propp.~VII.1.1 and~VII.1.6]{sten-75}.

Recall that a subset $\Vcal\subseteq\Spec R$ is \emph{specialisation-closed}
(\emph{sp-closed} for short) if it is an upper-set: ie.\ if $\pf\subseteq \qf$
and $\pf\in\Vcal$ imply $\qf\in\Vcal$. We denote by $V(\Pcal)$ the smallest
sp-closed subset of $\Spec R$ containing some $\Pcal\subseteq\Spec R$.

The associated primes classify all torsion pairs of $\mod R$, via a few of
remarkable results from the literature, which we collect in the following theorem:

\begin{thm}\label{thm:gabriel}
	All torsion pairs of $\mod R$ are hereditary. They is a bijection
	\[
		\{\,\text{torsion pairs }(\Tcal,\Fcal)\text{ in }\mod R\,\} \overset{1:1}\longleftrightarrow
			\{\,\text{specialisation-closed subsets }\Vcal\subseteq\Spec R\,\}
	\]
	given by the mutually inverse assignments
	\begin{align*}
		(\Tcal,\Fcal)&\mapsto V(\Ass \Tcal)\\
		\Vcal&\mapsto (\{M\mid \Ass M\subseteq \Vcal\}, \{M\mid \Ass M\subseteq
		\Vcal^c\})
	\end{align*}
	where $\Vcal^c:=\Spec R\setminus \Vcal$ denotes the complement.
\end{thm}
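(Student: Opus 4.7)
The plan is to verify the two maps of the theorem and check both compositions are identity, with the main technical obstacle being that torsion pairs in $\mod R$ are automatically hereditary.

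\textbf{Well-definedness of the map from $\Vcal$.} Given a specialisation-closed $\Vcal\subseteq\Spec R$, set $\Tcal_\Vcal:=\{M\in\mod R\mid \Ass M\subseteq\Vcal\}$. Using $\Ass B\subseteq\Ass A\cup\Ass C$ for short exact sequences together with the identity $\Supp M=V(\Ass M)$ for $M\in\mod R$ (which yields $\Ass(M/N)\subseteq\Supp M\subseteq V(\Ass M)\subseteq\Vcal$ by sp-closure), one sees that $\Tcal_\Vcal$ is closed under extensions, quotients, and finite coproducts; Proposition~\ref{prop:1-4}(1) then gives a torsion class. Its orthogonal coincides with $\Fcal_\Vcal:=\{M\mid\Ass M\subseteq\Vcal^c\}$: any $f\colon T\to M$ with $T\in\Tcal_\Vcal$ and $M\in\Fcal_\Vcal$ has $\Ass(\im f)$ contained in $\Vcal$ (as a quotient of $T$, by the argument above) and in $\Vcal^c$ (as a submodule of $M$), hence empty, hence $\im f=0$; conversely, if $\pf\in\Ass M\cap\Vcal$, then $R/\pf\hookrightarrow M$ with $R/\pf\in\Tcal_\Vcal$ witnesses $M\notin\Tcal_\Vcal^\bot$.

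\textbf{The compositions are identities.} The map $(\Tcal,\Fcal)\mapsto V(\Ass\Tcal)$ is trivially well-defined. For the round-trip starting from $\Vcal$: $\Ass\Tcal_\Vcal\subseteq\Vcal$ is by definition, while $R/\pf\in\Tcal_\Vcal$ for $\pf\in\Vcal$ gives $\Vcal\subseteq\Ass\Tcal_\Vcal$, so $V(\Ass\Tcal_\Vcal)=V(\Vcal)=\Vcal$ by sp-closure. For the round-trip starting from $(\Tcal,\Fcal)$: $\Tcal\subseteq\Tcal_{V(\Ass\Tcal)}$ is immediate from $\Ass M\subseteq\Ass\Tcal$ for $M\in\Tcal$. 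For the reverse, take any $M\in\Tcal_{V(\Ass\Tcal)}$ and a Bourbaki-style prime filtration $0=M_0\subsetneq\cdots\subsetneq M_n=M$ with factors $R/\pf_i$; since $\pf_i\in\Supp M\subseteq V(\Ass M)\subseteq V(\Ass\Tcal)$, each $\pf_i$ contains some $\qf_i\in\Ass\Tcal$, and so $R/\pf_i$ is a quotient of $R/\qf_i$, which is itself a submodule of some $T_i\in\Tcal$. Iterated extension-closure of $\Tcal$ would then give $M\in\Tcal$, provided that $R/\qf_i\in\Tcal$ for each $i$.

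\textbf{The main obstacle: hereditariness.} The missing step $R/\qf_i\in\Tcal$ is precisely closure of $\Tcal$ under the subobject $R/\qf_i\hookrightarrow T_i$, i.e., the first assertion of the theorem. I would establish hereditariness independently as follows: extend $(\Tcal,\Fcal)$ to a torsion pair in $\Mod R$ via direct limits (by Crawley-Boevey's result used in the proof of Proposition~\ref{prop:locally-finite}, $\varinjlim\Tcal$ is a torsion class in $\Mod R$ since $\mod R$ is noetherian) and invoke the standard equivalence, in a Grothendieck category with enough injectives, between hereditariness of a torsion pair and closure of its torsion-free class under injective envelopes. Combining Matlis' classification of injectives in $\Mod R$ as direct sums of $E(R/\pf)$ with the sp-closure of $V(\Ass\Tcal)$, one verifies that the extended torsion-free class contains each $E(R/\pf)$ for $\pf\notin V(\Ass\Tcal)$, so it is closed under all injective envelopes. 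Restricting the resulting hereditary torsion pair back to $\mod R$ then yields both claims of the theorem.
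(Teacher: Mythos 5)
Your reductions are sound: the verification that $\Tcal_\Vcal=\{M\mid\Ass M\subseteq\Vcal\}$ is a torsion class with $\Tcal_\Vcal^\bot=\{M\mid\Ass M\subseteq\Vcal^c\}$, and both round-trip computations (modulo hereditariness), are correct, and you rightly isolate hereditariness as the one missing ingredient. But your argument for hereditariness has a genuine gap exactly at its key step. Passing to the finite-type torsion pair $(\varinjlim\Tcal,\Tcal^\bot)$ in $\Mod R$ and using the criterion ``hereditary $\Leftrightarrow$ torsion-free class closed under injective envelopes'' is fine, and it is true that $E(R/\pf)\in\Tcal^\bot$ precisely when $\pf\notin V(\Ass\Tcal)$ (since $\Hom_R(T,E(R/\pf))\cong\Hom_{R_\pf}(T_\pf,E(R/\pf))$ vanishes iff $T_\pf=0$). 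However, the inference ``so it is closed under all injective envelopes'' does not follow from this: to conclude $E(F)\in\Tcal^\bot$ for $F\in\Tcal^\bot$ you must know, via Matlis, that every $\pf\in\Ass F$ lies outside $V(\Ass\Tcal)$, and that is not formal. It amounts to showing: if $\pf\in\Supp T$ for some $T\in\Tcal$ and $R/\pf\hookrightarrow F$, then some module of $\Tcal$ admits a nonzero map to $F$. The obvious candidate $R/\qf\twoheadrightarrow R/\pf\hookrightarrow F$ (with $\qf\in\Ass T$, $\qf\subseteq\pf$) starts from $R/\qf$, which is only a \emph{submodule} of $T$, so invoking it is circular. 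What you need is the nontrivial half of the commutative-algebra fact that for finitely generated modules $\Hom_R(M,N)\neq0$ iff $\Supp M\cap\Ass N\neq\emptyset$ (a localisation-plus-Nakayama argument at $\pf$ produces a nonzero map $T\to R/\pf$ whenever $\pf\in\Supp T$). This is where the substance of the theorem sits --- hereditariness of all torsion pairs already fails for the path algebra of $\Abb_2$, so genuinely commutative input must enter --- and your sketch never supplies it; the paper instead imports it from the literature (Angeleri H\"ugel--Hrbek via Crawley-Boevey's finite-type correspondence, or Iima--Matsui--Shimada--Takahashi).

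If you add that $\Hom$-vanishing lemma, your proof closes, and in fact the detour through $\Mod R$, injective envelopes and Matlis theory becomes unnecessary: working directly in $\mod R$, the lemma gives $\Tcal^\bot=\{M\mid\Ass M\cap V(\Ass\Tcal)=\emptyset\}$ and then ${}^\bot(\Tcal^\bot)=\{M\mid\Supp M\subseteq V(\Ass\Tcal)\}$ (using that $R/\pf\in\Tcal^\bot$ for $\pf\notin V(\Ass\Tcal)$), which yields the bijection and hereditariness in one stroke; this is essentially the more self-contained route of the cited paper of Iima--Matsui--Shimada--Takahashi, whereas the paper's own proof is purely citation-based.
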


\begin{proof}
	The fact that all torsion pairs of $\mod R$ are hereditary is a
	consequence of a recent result of Angeleri Hügel and Hrbek
	\cite[Lemma~4.2]{ange-hrbe-17}. In fact, by the already cited work of
	Crawley-Boevey~\cite[\S4.4]{craw-94}, restriction and closure under direct
	limits give a bijection between torsion pairs of
	$\mod R$ and torsion pairs of finite type of $\Mod R$;
	by the cited result, the latter are all hereditary, and therefore so are
	their restrictions, ie.\ all the torsion pairs of $\mod R$. On this note,
	see also \cite[Thm.~2.5]{iima-mats-shim-taka-22}\footnote{We warn the reader
	that in this paper the term \emph{torsion-free class} is used for a class
	closed under subobjects and extensions, regardless of whether it belongs to
	a torsion pair. Since the context is that of a noetherian, but not necessarily
	artinian category of modules, this notion differs from ours.},
	where the same result is achieved by a different route.

	In view of this, the bijection in the statement is a refinement of a
	classical theorem of Gabriel
	(see for example \cite[Prop.~2.3]{ange-posp-stov-trli-14}, which includes
	the explicit description of the torsion-free class). This result is more
	commonly stated writing $\Supp M$ instead of $V(\Ass M)$ (including in the
	provided reference); see \cite[Prop.~10.63.6]{stacks} for the equality between these two sets.
\end{proof}

We immediately deduce a complete description of the torsion-simple objects of $\mod
R$:

\begin{cor}\label{cor:ass-singleton}
	A module $M$ in $\mod R$ is torsion-simple if and only if $\Ass M$ is a
	singleton.
\end{cor}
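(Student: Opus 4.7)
The plan is to translate torsion-simplicity for $M$ directly through the bijection of Theorem~\ref{thm:gabriel}. Under that correspondence, a torsion pair corresponds to an sp-closed set $\Vcal\subseteq\Spec R$, with $M$ torsion iff $\Ass M\subseteq\Vcal$ and $M$ torsion-free iff $\Ass M\subseteq\Vcal^c$. Thus $M$ is torsion-simple if and only if, for every sp-closed $\Vcal$, the set $\Ass M$ lies entirely in $\Vcal$ or entirely in $\Vcal^c$.

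For the direction ($\Leftarrow$), I would just observe that if $\Ass M=\{\pf\}$, the dichotomy is automatic, according to whether $\pf\in\Vcal$ or not. Note that $\Ass M\neq\emptyset$ since $M\neq 0$, so this really is a singleton and not the empty set.

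For the direction ($\Rightarrow$), suppose $\Ass M$ contains two distinct primes $\pf\neq\qf$. Since they are distinct, at least one of $\pf\not\subseteq\qf$ or $\qf\not\subseteq\pf$ holds; after possibly swapping, assume $\pf\not\subseteq\qf$. Then the sp-closed set $\Vcal:=V(\pf)=\{\mf\in\Spec R\mid\pf\subseteq\mf\}$ contains $\pf$ but not $\qf$. Hence $\Ass M\not\subseteq\Vcal$ (it misses $\pf$'s companion $\qf$) and $\Ass M\not\subseteq\Vcal^c$ (it contains $\pf$). By the bijection, $M$ is neither torsion nor torsion-free for the corresponding pair, so it is not torsion-simple.

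There is essentially no obstacle here: the work is all done in Theorem~\ref{thm:gabriel}, and the only point that requires any thought is the elementary observation that two distinct primes can be separated by a principal sp-closed set $V(-)$, which is what makes the argument go through.
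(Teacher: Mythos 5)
Your proposal is correct and follows essentially the same route as the paper: translate torsion-simplicity through Theorem~\ref{thm:gabriel} and, when $\Ass M$ has two elements, separate them by a set of the form $V(\pf)$. The only (immaterial) difference is that the paper chooses $\pf$ maximal in $\Ass M$, whereas you pick $\pf\not\subseteq\qf$ among two distinct associated primes; both choices yield an sp-closed set meeting $\Ass M$ without containing it.
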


\begin{proof}
	$M\neq 0$ is torsion-simple if and only for every sp-closed set
	$\Vcal\subseteq\Spec R$ either $\Vcal$ or $\Vcal^c$ contains $\Ass M$.
	This is true if $\Ass M$ (which is non-empty) is a singleton,
	but false if it has more than one element: to see this, take a maximal
	element $\pf$ of $\Ass M$ and consider $\Vcal=V(\{\pf\})$.
\end{proof}

We invite the reader to compare this result with
Proposition~\ref{prop:finite-length}: it is general wisdom that associated
primes are the commutative noetherian analogue of the simple factors of the
finite-length case.

\begin{rmk}
	Note that from this characterisation it follows that the type of a
	torsion-simple module in $\mod R$ is determined by its unique associated
	prime; hence there is a bijection between the types of torsion-simple
	modules and the prime spectrum $\Spec R$.
\end{rmk}

In particular, since for $\pf\in\Spec R$ we have $\Ass(R/\pf)=\{\pf\}$
(as it is easily seen, or by \cite[Lemma~10.63.4]{stacks}), we obtain the following:
\begin{cor}\label{cor:r/p}
	For a commutative noetherian ring $R$, the cyclic module $R/\pf$
	is torsion-simple for every prime $\pf$.
\end{cor}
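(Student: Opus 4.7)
The plan is to apply Corollary~\ref{cor:ass-singleton} directly: since that result characterises torsion-simple modules in $\mod R$ as exactly those with a singleton associated-prime set, the task reduces to verifying that $\Ass(R/\pf) = \{\pf\}$, which is a piece of standard commutative algebra that I would prove in one line for completeness.

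To carry out that verification, I would pick an arbitrary non-zero element of $R/\pf$: it has the form $\bar r$ for some $r \in R \setminus \pf$, and its annihilator is $\Ann(\bar r) = \{s \in R \mid sr \in \pf\}$. Because $\pf$ is prime and $r \notin \pf$, this equals $\pf$ itself. Taking $r = 1$ (which is legitimate since $\pf$ is proper, so $R/\pf \neq 0$) exhibits $\pf$ as an associated prime; and since every non-zero element has the same annihilator $\pf$, no other primes occur. Hence $\Ass(R/\pf) = \{\pf\}$.

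There is no real obstacle: the corollary is a restatement, in the language of Section~3.2, of the classical fact about associated primes of cyclic modules $R/\pf$, and all the substantive work sits in Theorem~\ref{thm:gabriel} and Corollary~\ref{cor:ass-singleton}. The only bookkeeping point is to note that the statement concerns torsion-simplicity in $\mod R$, which is the setting of Corollary~\ref{cor:ass-singleton}; one could if desired also promote this to $\Mod R$ via Corollary~\ref{cor:restriction}, since $\Mod R$ is locally noetherian Grothendieck with $\fp(\Mod R) = \mod R$.
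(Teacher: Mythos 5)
Your proposal is correct and matches the paper's own argument: the Corollary is deduced from Corollary~\ref{cor:ass-singleton} together with the standard fact that $\Ass(R/\pf)=\{\pf\}$, which you verify exactly as one should (every non-zero $\bar r$ has annihilator $\pf$ by primality). Your closing remark about promoting the statement to $\Mod R$ via Corollary~\ref{cor:restriction} also agrees with the paper's subsequent discussion, so there is nothing to add.
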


Later on, in \S\ref{subsec:commalg}, we are going to give another, more
elementary proof of this fact, using some commutative algebra rather than the
deep results behind Theorem~\ref{thm:gabriel}.

Having dealt with the finitely generated modules, we pass to $\Mod R$. First,
notice that Corollary~\ref{cor:restriction} applies to show that the finitely
generated modules having a unique associated prime are also torsion-simple in
$\Mod R$. This was not so clear \emph{a priori}, as we do not have any grasp on
\emph{all} torsion pairs in $\Mod R$ (see the footnote on
page~\pageref{foot:stanley}).

An example of (often infinitely generated) torsion-simple modules is the
following:

\begin{ex}\label{ex:residues}
	For every prime $\pf$, the residue field $k(\pf):=R_\pf/\pf R_\pf$ is
	torsion-simple in $\Mod R$; this follows from
	Corollary~\ref{cor:division-ring}. Notice that this does not need $R$ to be
	noetherian.
\end{ex}

We chose to highlight this example because the residue fields are important
objects from a homological point of view. On the other hand, the same Corollary
also ensures that the other division $R$-algebras (for example, the quaternions
$\Hbb$ in $\Mod \Zbb$) are torsion-simple.

Our next objects of interest are the indecomposable injective modules. By a famous
result of Matlis \cite[Thm.~2.4]{matl-58}, these are the injective envelopes $E(R/\pf)$ of the
modules $R/\pf$. $E(R/\pf)$ has $\pf$ as its unique associated prime;
this is also true for all its finitely generated submodules, which are then
torsion-simple of the same type. In other words, for any torsion pair, either
all the finitely generated submodules of $E(R/\pf)$ are torsion, or they are
torsion-free. In the first case, $E(R/\pf)$ is torsion as well, since torsion
classes are closed under direct limits (and therefore unions). In the second
case, however, it is harder to control $E(R/\pf)$. Already when $R=\Zbb$, we can
observe this behaviour:

\begin{ex}
	Consider the torsion pair $(\Dcal,\Rcal)$ with $\Dcal=\Inj \Zbb$ being the
	class of divisible groups, and $\Rcal=\Dcal^\bot$ the class of reduced groups. Notice
	that $\mod \Zbb\subseteq\Rcal$: hence all finitely generated
	submodules of the injectives (including those of $E(\Zbb/\pf)$, for every prime
	$\pf$) are torsion-free, while the injectives
	themselves are not, and are in fact torsion.
\end{ex}

\noindent This is not a counterexample to the indecomposable injectives being
torsion-simple. Indeed, this claim is true for Dedekind domains, as we are going
to show now.

Notice first that to check whether $E(R/\pf)$ is torsion-simple, we may assume
without loss of generality that $R$ is local with maximal ideal $\pf$. Indeed,
$E(R/\pf)$ has a natural structure of $R_\pf$-module, so we can apply
Corollary~\ref{cor:localisation} to the ring epimorphism $R\to R_\pf$.
If we start with a Dedekind domain $R$, passing to the localisation we get a
PID, and we can apply the following result:

\begin{lemma}\label{lemma:PID}
	Let $(R,\mf)$ be a local PID. Then the injective envelope of the residue field
	$k$ is torsion-simple.
\end{lemma}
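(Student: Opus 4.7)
The plan is to apply Proposition~\ref{prop:injective} to $E := E(k)$ in the locally finitely presented Grothendieck category $\Mod R$, taking for $f$ the endomorphism given by multiplication by a uniformiser $\pi$ (that is, a generator of $\mf$). The output of the proposition will say that any proper torsion part of $E$ must fail to lie between $\ker f$ and $\im f$, and I will then observe that, because $E$ is an injective envelope of a simple module over a PID, these two constraints are automatically satisfied by every nonzero submodule.

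First I would recall the classical structure of $E := E(k)$ over a local PID (cf.\ \cite[\S31]{lam-99}): the socle $\mathrm{soc}(E)$ is the simple essential submodule $k \simeq R/\mf$, and $E$ is the direct union of its finite-length submodules $E_n := \{x \in E \mid \mf^n x = 0\} \simeq R/\mf^n$. This verifies the standing hypothesis of Proposition~\ref{prop:injective} on $E$. Moreover, because $R$ is Dedekind, injective $R$-modules are divisible, so multiplication by $\pi$ is surjective on $E$; that is, $\im f = E$.

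Next I would verify the two numbered hypotheses for $f$:
\begin{enumerate}
    \item $\ker f$ is the $\pi$-torsion in $E$, which coincides with the socle $k$ (since everything in $E$ is $\mf$-power-torsion and $\ker f$ is precisely the $\mf$-torsion). This is essential in $E$ by definition of the injective envelope.
    \item $\ker f \subseteq E = \im f$ trivially.
\end{enumerate}

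Finally I would conclude as follows. Let $T \subseteq E$ be a torsion part with $T \neq 0$. Essentiality of $\mathrm{soc}(E) = k$ in $E$ gives $T \cap k \neq 0$, and since $k$ is simple this forces $k \subseteq T$, i.e.\ $\ker f \subseteq T$. Of course $T \subseteq E = \im f$. Proposition~\ref{prop:injective} then rules out $T \subsetneq E$, so $T = E$; hence the only torsion parts of $E$ are $0$ and $E$, meaning $E$ is torsion-simple.

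There is no real obstacle: the content of the argument is assembling the structural facts about $E(k)$ so that they fit the hypotheses of Proposition~\ref{prop:injective}. The only point deserving care is the identification $\ker f = \mathrm{soc}(E)$ and the divisibility of $E$, both of which follow from $R$ being a local PID (so every element of $E$ is killed by some power of $\pi$, and every element is divisible by $\pi$).
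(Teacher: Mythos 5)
Your argument is correct and is essentially the paper's own proof: both apply Proposition~\ref{prop:injective} to multiplication by a generator of $\mf$ on $E(k)$, identify the kernel with the essential simple socle (so it lies in every non-zero submodule, in particular in every non-zero torsion part) and use divisibility/injectivity to get surjectivity, so that the proposition excludes all proper non-zero torsion parts. The only point to add is the degenerate case where $R$ is a field (so $\mf=0$ and there is no non-zero uniformiser, and the divisibility step breaks down), which the paper dispatches separately as trivial.
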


\begin{proof}
	If $R$ is a field, the claim is trivial, as there are only the two trivial
	torsion pairs in $\Mod R$. Otherwise, we are argue towards an application of
	Proposition~\ref{prop:injective}.
	First, notice that all the finitely generated submodules of $E(k)$, having
	$\mf$ as their unique associated prime, have finite length; hence $E(k)$ is
	the union of its submodules of finite length.
	Let $0\neq m\in R$ be a generator of
	the maximal ideal, and consider the endomorphism of $E(k)$ given by
	multiplication by $m$. The kernel of this endomorphism lies in the (simple) socle of
	$E(k)$, since it is annihilated by the maximal ideal; therefore it is
	precisely the socle $k\subseteq E(k)$, which is contained in every non-zero submodule. On the
	other hand, by injectivity, we can `divide' by $m$ every element of $E(k)$
	annihilated by $\Ann(m)=0$; this means that our endomorphism is surjective.
	We conclude that there are no proper torsion parts containing $k$; hence the
	only torsion parts of $E(k)$ are $0$ and $E(k)$.
\end{proof}

As explained above, we deduce the following:

\begin{prop}\label{prop:dedekind}
	Let $R$ be a Dedekind domain. Then every indecomposable injective $R$-module
	is torsion-simple in $\Mod R$.
\end{prop}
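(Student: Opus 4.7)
The plan is to combine Matlis's classification of indecomposable injectives with the reduction machinery already developed (Example~\ref{ex:residues}, Corollary~\ref{cor:localisation}) and the local-PID result Lemma~\ref{lemma:PID}. By Matlis's theorem, the indecomposable injective $R$-modules are precisely the injective envelopes $E(R/\pf)$ for $\pf\in\Spec R$. Since $R$ is a Dedekind domain, $\Spec R$ consists of the zero ideal and the maximal ideals, so I would treat these two cases separately.

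First, if $\pf=(0)$, then $R/\pf=R$ is a domain, and its injective envelope is the fraction field $Q=\mathrm{Frac}(R)=k(0)$. This is a residue field of $R$, and by Example~\ref{ex:residues} (an instance of Corollary~\ref{cor:division-ring}, applied with $\pf=0$) it is torsion-simple in $\Mod R$; no further work is needed here.

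Second, if $\pf=\mf$ is a maximal ideal, then $E(R/\mf)$ carries a canonical $R_\mf$-module structure (as every element is annihilated by some power of $\mf$). The localisation map $R\to R_\mf$ is a (flat) ring epimorphism with commutative target, so Corollary~\ref{cor:localisation} applies: $E(R/\mf)$ is torsion-simple in $\Mod R$ if and only if it is torsion-simple in $\Mod R_\mf$. Under this identification, $E_R(R/\mf)$ coincides with $E_{R_\mf}(R_\mf/\mf R_\mf)=E_{R_\mf}(k(\mf))$, the injective envelope of the residue field over $R_\mf$. Since $R$ is a Dedekind domain, the localisation $R_\mf$ is a discrete valuation ring, hence a local PID, so Lemma~\ref{lemma:PID} applies and yields that $E_{R_\mf}(k(\mf))$ is torsion-simple in $\Mod R_\mf$.

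I do not anticipate a genuine obstacle: the two tools (Corollary~\ref{cor:localisation} to reduce to the local situation, and Lemma~\ref{lemma:PID} to handle it) have been tailored precisely for this conclusion, and the reduction via Matlis simply disposes of the generic point separately. The only point deserving a brief sanity check is the compatibility of injective envelopes under localisation (namely that $E_R(R/\mf)\cong E_{R_\mf}(k(\mf))$ as $R_\mf$-modules), but this is standard.
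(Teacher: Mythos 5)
Your proof is correct and takes essentially the same route as the paper: reduce via the ring epimorphism $R\to R_\pf$ using Corollary~\ref{cor:localisation}, note that $R_\pf$ is a local PID, and apply Lemma~\ref{lemma:PID}. The only (cosmetic) difference is at the generic point, which you treat separately via Example~\ref{ex:residues}, whereas the paper localises there too and relies on the fact that Lemma~\ref{lemma:PID} covers the field case trivially.
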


\begin{proof}
	For every $\pf\in\Spec R$, by Corollary~\ref{cor:localisation} the
	indecomposable injective $E(R/\pf)$ is torsion-simple over $R$ if and only
	if so it is over $R_\pf$, where it is the injective envelope of the residue
	field. A localisation $R_\pf$ of a Dedekind domain is a PID (see eg.\
	\cite[\S9]{atiy-macd-69}), so we conclude by applying Lemma~\ref{lemma:PID}.
\end{proof}

Part of this Proposition, namely the fact that the injective envelopes of the
simple modules are torsion-simple, can also be deduced by the following
observation:

\begin{prop}\label{prop:simple-envelopes}
	Let $\Gcal$ be a hereditary Grothendieck category, that is, with
	$\gldim\Gcal\leq 1$, and let $s$ be a simple object. If the injective
	envelope $E(s)$ of $s$ is contained in the torsion class generated by $s$,
	it is torsion-simple.
\end{prop}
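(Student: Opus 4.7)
The plan is to take an arbitrary torsion pair $(\Tcal, \Fcal)$ and verify that $E(s)$ lies in $\Tcal \cup \Fcal$. Since the simple object $s$ is torsion-simple, I split on whether $s \in \Tcal$ or $s \in \Fcal$. The first case is immediate from the hypothesis: $\Gen(s)$ is the smallest torsion class containing $s$, so $\Gen(s) \subseteq \Tcal$, and hence $E(s) \in \Gen(s) \subseteq \Tcal$.

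The substantive case is $s \in \Fcal$. Here I consider the torsion decomposition $0 \to tE(s) \to E(s) \to fE(s) \to 0$ and aim to show that either $tE(s) = 0$ or $fE(s) = 0$. If $fE(s) = 0$ there is nothing to do, since $E(s) = tE(s) \in \Tcal$; so the remaining task is to treat the case $fE(s) \neq 0$, for which I will argue that $E(s) \in \Fcal$.

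The crux is the following chain of observations. As $fE(s)$ is a nonzero quotient of $E(s) \in \Gen(s)$, it again belongs to $\Gen(s)$; combined with the general fact $\Gen(s) \cap s^\bot = 0$ (the defining disjointness of the torsion pair $(\Gen(s), s^\bot)$), this forces $\Hom(s, fE(s)) \neq 0$, and simplicity of $s$ upgrades the nonzero map to an embedding $s \hookrightarrow fE(s)$. At this point I invoke the hereditariness of $\Gcal$: as a quotient of the injective object $E(s)$, the object $fE(s)$ is itself injective. Therefore the embedding $s \hookrightarrow fE(s)$ extends along $s \hookrightarrow E(s)$ (by injectivity of $fE(s)$) to a morphism $E(s) \to fE(s)$; this extension is in fact injective, because its kernel meets the essential subobject $s$ trivially and $s$ is essential in $E(s)$. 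Since $E(s)$ is itself injective, the mono $E(s) \hookrightarrow fE(s)$ splits, realising $E(s)$ as a direct summand --- in particular, a subobject --- of $fE(s) \in \Fcal$. As torsion-free classes are always closed under subobjects, I conclude $E(s) \in \Fcal$, as wanted.

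The main obstacle is producing the embedding $E(s) \hookrightarrow fE(s)$ in the case $s \in \Fcal$. Both hypotheses pull their weight here: hereditariness ensures $fE(s)$ is injective, which is what allows $s \hookrightarrow fE(s)$ to lift to the injective envelope $E(s)$, while the assumption $E(s) \in \Gen(s)$ provides the initial copy of $s$ inside $fE(s)$ via the fact that every nonzero object of $\Gen(s)$ receives a nonzero morphism from $s$.
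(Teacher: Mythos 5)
Correct, and essentially the paper's argument: both proofs hinge on hereditariness making the relevant quotient of $E(s)$ injective and on $E(s)$ lying in the torsion class generated by $s$, which yields a monomorphism from $s$ into any nonzero quotient of $E(s)$ that then extends by injectivity. The packaging differs only slightly --- the paper extends along $s\subseteq M$ to produce a nonzero morphism $M\to E(s)/M$ for every nonzero proper subobject $M$, whereas you take $M=tE(s)$ and extend along $s\subseteq E(s)$ to obtain a monomorphism $E(s)\hookrightarrow fE(s)$ and conclude by closure of $\Fcal$ under subobjects; your opening case split on $s\in\Tcal$ versus $s\in\Fcal$ and the final splitting of the monomorphism are superfluous but harmless.
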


\begin{proof}
	Let $\Tcal$ be the torsion class generated by $s$, and let
	$E:=E(s)$ be the injective envelope of $s$.
	Let $0\neq M\subset E$ be a non-zero proper subobject; notice that it must
	contain $s\subseteq E$, which is simple and essential.
	Consider the quotient $E/M$, which is injective because $\idim M\leq 1$.
	Since $E$ belongs to $\Tcal$, so does $E/M$; in particular, it does not
	belong to the torsion-free class $\Tcal^\bot=s^\bot$. In other words, there
	is a non-zero (=monic) morphism $s\to E/M$. By injectivity of $E/M$, this
	lifts along the inclusion $s\subseteq M$ to a non-zero morphism $M\to E/M$.
	Hence $M$ is not a torsion part of $E$.
\end{proof}

\begin{cor}
	Let $R$ be a hereditary commutative noetherian ring, and $\mf$ a maximal
	ideal. Then the indecomposable injective $E(R/\mf)$ is torsion-simple.
\end{cor}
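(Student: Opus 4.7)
The plan is to invoke Proposition~\ref{prop:simple-envelopes} with $\Gcal=\Mod R$ and $s=R/\mf$. The two easy hypotheses are immediate: $\Mod R$ is hereditary because $\gldim\Mod R=\gldim R\le 1$ by assumption, and $s=R/\mf$ is simple because $\mf$ is maximal. The only substantive task is to verify that $E(R/\mf)$ lies in the torsion class $\Tcal$ generated by $s$ in $\Mod R$.

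By the proof of Proposition~\ref{prop:locally-finite}, in a locally noetherian Grothendieck category this torsion class coincides with $\varinjlim\serre(s)$. Since $E(R/\mf)$ is the direct union of its finitely generated submodules, it therefore suffices to show that every finitely generated $0\neq N\subseteq E(R/\mf)$ belongs to $\serre(R/\mf)$, i.e.\ admits a finite filtration whose factors are all isomorphic to $R/\mf$.

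This is where the only real work is required. From Matlis theory one has $\Ass E(R/\mf)=\{\mf\}$, hence $\Ass N=\{\mf\}$. Because $\mf$ is maximal, $\Supp N=V(\Ass N)=\{\mf\}$; combined with $N$ being finitely generated over the noetherian ring $R$, this forces $\mf^n N=0$ for some $n\ge 1$. Thus $N$ is a finitely generated module over the quotient $R/\mf^n$, which is a noetherian ring whose only prime is $\mf$, hence artinian. Its unique simple module is $R/\mf$, so any Jordan--Hölder filtration of $N$ has all factors isomorphic to $R/\mf$, placing $N$ in $\serre(R/\mf)$, as required.

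The main obstacle is really just the commutative-algebra step showing $\mf^n N=0$ for some $n$; once that is in hand, Proposition~\ref{prop:simple-envelopes} delivers the conclusion directly. Note also that one could alternatively reduce, via Corollary~\ref{cor:localisation} applied to $R\to R_\mf$, to the local hereditary case (which is a DVR or a field) and invoke Lemma~\ref{lemma:PID}; but the argument above has the merit of illustrating the use of Proposition~\ref{prop:simple-envelopes}, which is why this corollary appears immediately after it.
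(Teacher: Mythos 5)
Your proof is correct, and it ends the same way the paper does, by feeding the hypotheses into Proposition~\ref{prop:simple-envelopes}; the difference lies entirely in how you verify the one substantive hypothesis, namely $E(R/\mf)\in\Tcal$, where $\Tcal$ is the torsion class generated by $R/\mf$. The paper gets this in two lines of general torsion theory: the torsion pair generated by a simple object is hereditary, and over a commutative ring every hereditary torsion class is \emph{stable}, i.e.\ closed under injective envelopes (Stenström), so $E(R/\mf)\in\Tcal$ automatically. You instead prove the containment by hand: $\Ass E(R/\mf)=\{\mf\}$, so every finitely generated $N\subseteq E(R/\mf)$ has $\Supp N=\{\mf\}$, is killed by some $\mf^n$, and hence is filtered by $R/\mf$ over the artinian local ring $R/\mf^n$; since $E(R/\mf)$ is the directed union of such $N$ and $\Tcal=\varinjlim\serre(R/\mf)$ is closed under direct limits, the containment follows. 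Each step checks out (and, reassuringly, your argument also shows the containment holds for the non-hereditary ring of Example~\ref{ex:double-origin}, where only the $\gldim\leq 1$ hypothesis fails). The trade-off: the paper's route is shorter and more conceptual, and the stability input works for any hereditary torsion class, not just the one generated by a simple; your route is more elementary and self-contained, avoiding the citation to Stenström's stability results, but it leans on $\mf$ being maximal (so that $V(\{\mf\})=\{\mf\}$ and the quotients $R/\mf^n$ are artinian), which is exactly the case at hand. Your closing remark about localising at $\mf$ and invoking Lemma~\ref{lemma:PID} is also a valid alternative, essentially reproducing the proof of Proposition~\ref{prop:dedekind}.
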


\begin{proof}
	The torsion pair generated by a simple object is always hereditary; and in
	$\Mod R$, the hereditary torsion classes are closed under taking injective
	envelopes (ie.\ they are \emph{stable}, see \cite[Propp.~VI.7.1
	and~VII.4.5]{sten-75}). The conclusion follows from the Proposition.
\end{proof}

\begin{rmk}
	Notice that the stability of all hereditary torsion pairs is not common outside
	the commutative setting: for example, already on the path algebra of the
	$\Abb_2$ quiver, the hereditary torsion pair generated by the simple
	projective does not contain its injective envelope (the
	projective-injective).
\end{rmk}

As mentioned, if $R$ is a Dedekind domain, it satisfies the assumptions of this
Corollary, hence we recover part of Proposition~\ref{prop:dedekind}.

We conclude with a counterexample, showing that the statement of
Proposition~\ref{prop:dedekind} is not true for every
commutative noetherian ring.

\begin{ex}\label{ex:double-origin}
	Consider the local ring $R:=(\Cbb[x,y]/(x^2,xy))_{(x,y)}$, obtained by
	localising the `line with the double origin' at the `double origin' itself.
	Denote by $\mf=(x,y)$ the maximal ideal.
	Let $E$ be the injective envelope of the residue field $\Cbb$, and let
	$T:=\Ann^E(x)$ be the submodule of $E$ consisting of the elements annihilated by
	$x$: it is non-zero, as it contains $k\subseteq E(k)$. Since $x(x,y)=0$, we have
	that $\mf E\subseteq T$; this shows that $\mf(E/T)=0$, that is, this
	quotient is a direct sum of copies of the residue field. In particular, it
	has finite length (a computation shows that it is
	actually isomorphic to $\Cbb$; see \cite[\S31]{lam-99} for useful details on the
	structure of $E$). On the other hand, since $\Ann(y)=(x)$, for every
	element $t\in T$ one can define a morphism $\mf\to T, x\mapsto 0, y\mapsto
	t$. By injectivity of $E$, this extends to a morphism $R\to E,1\mapsto e$
	with $xe=0$ and $ye=t$. In particular, $e\in\Ann^E(x)=T$; this shows
	that every element of $T$ is `divisible by $y$' in $T$, and therefore we
	have $\mf T=T$. This property must carry over to any quotient of $T$, and
	therefore $T$ does not have any finitely generated quotients by Nakayama's
	Lemma; equivalently, it does not have non-zero morphisms to finitely
	generated modules.

	As a result, the sequence $(0\to T\to E\to E/T\to 0)$ is a torsion sequence
	with respect to all torsion pairs whose torsion class lies bewteen the one
	generated by $T$ and the one cogenerated by $E/T$ (equivalently, by $\Cbb$).
\end{ex}

\begin{rmk}
	In the Example, all finitely generated subobjects of $E$ have finite length,
	as they have $\mf$ as their unique associated prime. Therefore, $E$ admits
	a(n infinite) filtration with simple factors, all isomorphic to $\Cbb$
	(the unique simple). For this reason, this is also a counterexample to the
	converse implication of Proposition~\ref{prop:locally-finite}.
\end{rmk}

We remark that the ring of this counterexample is not even Cohen-Macaulay. It
remains open to decide whether Proposition~\ref{prop:dedekind} generalises to
wider classes of rings.

\subsection{Quasi-coherent sheaves over $\Pbb^1(k)$}\label{subsec:sheaves}

In this section, we use the results obtained for Dedekind domain to address the
categories $\coh(\Pbb^1)$ and $\Qcoh(\Pbb^1)$ of (quasi)coherent sheaves over
the projective line $\Pbb^1(k)$, for an algebraically closed field $k$.

The relation with the previous section is given by the open covering of $\Pbb^1$
with two copies of the affine line $\Spec{k[t]}$, identified with the open
subsets $U_0:=\Pbb^1\setminus\{\infty\}$ and $U_\infty:=\Pbb^1\setminus \{0\}$.
The inclusions $i,j$ of these open subschemes induce adjunctions
\[\begin{tikzcd}
	\Qcoh(\Pbb^1) \arrow[shift left]{r}{i^{-1}} & \arrow[shift left]{l}{i_\ast} \Qcoh(U_0)\simeq \Mod k[t]
	\qquad
	\Qcoh(\Pbb^1) \arrow[shift left]{r}{j^{-1}} & \arrow[shift left]{l}{j_\ast} \Qcoh(U_\infty)\simeq \Mod k[t]
\end{tikzcd}\]
where $i^{-1}$ and $j^{-1}$ are the restriction functors, while $i_\ast$ and $j_\ast$
are the direct image functors defined by
\[\Gamma(U,i_\ast\Fcal)=\Gamma(U\cap U_0,\Fcal)\text{ for every }\Fcal\in\Qcoh(U_0)\]
and similarly for $j_\ast$ (see \cite[\S{}II~Ex.~1.18]{hart-77}). This
identifies $\Qcoh(U_0)$ with the full subcategory of $\Qcoh(\Pbb^1)$ consisting
of sheaves for which the unit of the adjunction $(i^{-1},i_\ast)$ is an
isomorphism, that is, those $\Fcal$ such that $\Gamma(U,\Fcal)=\Gamma(U\cap
U_0,\Fcal)$ for every open $U\subseteq \Pbb^1$; and similarly for
$\Qcoh(U_\infty)$. These functors restrict to identify $\coh(U_0)$ and
$\coh(U_\infty)$ with two subcategories of $\coh(\Pbb^1)$.
In turn, the equivalence $\Qcoh(U_0)\simeq\Mod{k[t]}$ identifies a
quasi-coherent sheaf $\Fcal$ on $U_0$ with its $k[t]$-module of (global) sections
$\Gamma(U_0,\Fcal)$, and it restricts to an equivalence
$\coh(U_0)\simeq\mod{k[t]}$.

For every point $P$, one can compute the \emph{stalk} of a sheaf $\Fcal$ at $P$,
which is an abelian group $\Fcal_P$: this defines a functor $\Fcal\mapsto
\Fcal_P$ which is exact and commutes with coproducts. Therefore we have:

\begin{lemma}
	For a set $\Scal$ of closed points of $\Pbb^1$, the class $\Zcal_{\Scal}$ of
	quasicoherent sheaves $\Fcal$ for which $\Fcal_P=0$
	for every $P\in\Scal$ is a hereditary torsion class in $\Qcoh(\Pbb^1)$.
\end{lemma}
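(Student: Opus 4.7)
The plan is to reduce the statement to Proposition~\ref{prop:1-4} via the stalk functors, which by the remarks immediately preceding the lemma are exact and coproduct-preserving. Since $\Qcoh(\Pbb^1)$ is a Grothendieck category (so in particular has arbitrary coproducts), Proposition~\ref{prop:1-4}(2) reduces the claim to showing that $\Zcal_\Scal$ is closed under extensions, quotients, and arbitrary coproducts; and heredity will then be closure under subobjects.

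First I would write
\[\Zcal_\Scal=\bigcap_{P\in\Scal}\Zcal_{\{P\}},\]
and note that all of the closure properties we care about are preserved under arbitrary intersections of subcategories. This reduces the problem to the case of a single closed point $P\in\Pbb^1$.

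Next, for a fixed $P$, the stalk functor $(-)_P\colon\Qcoh(\Pbb^1)\to\Mod\Ocal_{\Pbb^1,P}$ is exact and commutes with coproducts. Exactness immediately gives that $\Zcal_{\{P\}}=\ker((-)_P)$ is closed under subobjects, quotients, and extensions: if $0\to\Fcal'\to\Fcal\to\Fcal''\to 0$ is exact and two of the three stalks vanish at $P$, so does the third. Coproduct-preservation, together with the fact that $\bigoplus\Fcal_i\to\bigoplus \Gcal_i$ at $P$ is the coproduct of the stalks, gives closure under arbitrary coproducts.

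The main obstacle here is essentially non-existent: the only thing to be mindful of is that heredity (closure under subobjects) must be explicitly extracted from the full strength of exactness, not just from right-exactness, which is why I would state both directions (subobjects and quotients) as consequences of exactness rather than conflate them with the torsion-class conditions. Putting these together and applying Proposition~\ref{prop:1-4}(2) to $\bigcap_{P\in\Scal}\Zcal_{\{P\}}$ yields that $\Zcal_\Scal$ is a torsion class, and the additional closure under subobjects makes it hereditary.
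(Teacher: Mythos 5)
Your proposal is correct and follows essentially the same route as the paper: the paper's one-line proof asserts exactly the closure properties (extensions, subobjects, quotients, coproducts) that you derive from the exactness and coproduct-preservation of the stalk functors, and then invokes the torsion-class criterion coming from Proposition~\ref{prop:1-4}. Your reduction to a single point via intersections is a harmless organizational refinement, not a different argument.
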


\begin{proof}
	$\Zcal_{\Scal}$ is closed under extensions, subobjects, quotients and
	coproducts, hence it is a hereditary torsion class in $\Qcoh(\Pbb^1)$.
\end{proof}

If we choose $\Scal=\{\infty\}$, we have an inclusion
$\Zcal_{\{\infty\}}\subseteq\Qcoh(U_0)$:

\begin{lemma}
	Let $\Fcal\in\Qcoh(\Pbb^1)$ be such that the stalk at infinity is
	$\Fcal_\infty=0$. Then, for any open subset $U\subseteq\Pbb^1$, we have
	$\Gamma(U,\Fcal)\simeq\Gamma(U\cap U_0,\Fcal)$.
\end{lemma}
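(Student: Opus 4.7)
The plan is to use the local hypothesis $\Fcal_\infty = 0$ to show that the restriction $\Fcal|_{U_\infty}$ behaves, sheaf-theoretically, as if it were supported away from $\infty$, and then to extend the conclusion to arbitrary opens via the sheaf axiom on the cover $\{U_0, U_\infty\}$. Under the identification $U_\infty \simeq \Spec k[s]$ with $\infty \leftrightarrow (s)$ and $U_0\cap U_\infty\leftrightarrow D(s)$, the sheaf $\Fcal|_{U_\infty}$ corresponds to a $k[s]$-module $M$, and the hypothesis becomes $M_{(s)} = 0$.

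The key algebraic step is to show that multiplication by $s$ is an isomorphism of $M$. For surjectivity: given $m\in M$, since $M_{(s)} = 0$ there exists $f\in k[s]\setminus(s)$ with $fm = 0$; writing $f = f(0) + sh$ with $f(0)\in k^\times$, the relation $f(0)\, m = -s(hm)$ places $m$ in $sM$. For injectivity: any $n\in M$ with $sn = 0$ is also annihilated by some $f\notin (s)$ by the same argument, and the maximality of $(s)$ in $k[s]$ then gives $(s, f) = k[s]$, forcing $n = 0$.

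Since $s$ acts invertibly on $M$, we have $M[g^{-1}] = M[(gs)^{-1}]$ for every $g\in k[s]$; and because $k[s]$ is a PID, every open $V \subseteq U_\infty$ is a principal open $D(g)$, so $\Fcal(V) = M[g^{-1}] = M[(gs)^{-1}] = \Fcal(V\cap U_0)$. Finally, for a general open $U\subseteq \Pbb^1$, apply the sheaf equalizer for the cover $U = (U\cap U_0)\cup(U\cap U_\infty)$: by the previous step, the restriction $\Fcal(U\cap U_\infty)\to\Fcal(U\cap U_0\cap U_\infty)$ is an isomorphism, which collapses the equalizer and yields $\Fcal(U)\simeq\Fcal(U\cap U_0)$. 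The main obstacle is the algebraic step — extracting the invertibility of $s$ from the stalk-vanishing hypothesis — while the subsequent sheaf-theoretic manipulations are formal.
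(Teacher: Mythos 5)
Your proof is correct, but it takes a genuinely different route from the paper. The paper argues directly with sections: injectivity of the restriction map $\Gamma(U,\Fcal)\to\Gamma(U\cap U_0,\Fcal)$ because the only point of $U$ missing from $U\cap U_0$ is $\infty$, where the stalk vanishes; surjectivity by observing that a section over $U\cap U_0$ vanishes on $U\cap V\cap U_0$ for some neighbourhood $V$ of $\infty$, and then gluing it with the zero section on $V$. You instead make the argument affine-local: you translate the hypothesis into $M_{(s)}=0$ for the $k[s]$-module $M=\Gamma(U_\infty,\Fcal)$, prove that multiplication by $s$ is bijective on $M$ (both halves of your argument are fine: $fm=0$ with $f(0)\neq 0$ gives $m\in sM$, and $(s,f)=k[s]$ kills the $s$-torsion), deduce that every restriction $\Fcal(V)\to\Fcal(V\cap U_0)$ for $V\subseteq U_\infty$ is an isomorphism of localisations, and then collapse the sheaf equalizer for the cover $\{U\cap U_0,\,U\cap U_\infty\}$; note that the resulting isomorphism $\Fcal(U)\simeq\Fcal(U\cap U_0)$ is indeed the restriction map, which is what the paper needs to conclude $\Fcal\in\Qcoh(U_0)$. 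What your version buys is that the role of quasi-coherence is made completely explicit: the paper's surjectivity step (existence of a neighbourhood of $\infty$ on which the given section vanishes) is exactly where quasi-coherence enters and is left implicit there — for an arbitrary sheaf with vanishing stalk at $\infty$ the statement fails (e.g.\ the extension by zero $j_!\Ocal_{U_0}$ has zero stalk at $\infty$, yet its section $1$ over $U_0$ does not extend). What the paper's version buys is brevity and independence from the choice of coordinates; your reduction to principal opens uses that $k[s]$ is a PID, although in fact you only need the single open $V=U\cap U_\infty$, so this reliance is harmless.
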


\begin{proof}
	There is nothing to prove if $U\subseteq U_0$, so we assume that instead
	$\infty\in U$.
	Since $U_0$ is dense in $U$, the restriction map
	$\Gamma(U,\Fcal)\to \Gamma(U\cap U_0,\Fcal)$ is monic. To show
	that it is epic, let $s$ be a section on $U\cap U_0$. Since $s$
	vanishes in the stalk at $\infty$ by assumption, there exists an open
	neighbourhood $V$ of $\infty$ such that $s_{|U\cap V}=0$. Hence we
	can glue $s$ with the zero section on $V$, thus extending $s$ to a section
	on $U$.
\end{proof}

We use this subcategory $\Zcal_{\{\infty\}}$ as a `pivot' to prove the
following characterisation of torsion-simple objects.

\begin{prop}\label{prop:sheaf-with-gap}
	Let $\Fcal\in\Qcoh(\Pbb^1)$ be such that $\Fcal_\infty=0$. Then $\Fcal$ is
	torsion-simple in $\Qcoh(\Pbb^1)$ if and only if $\Gamma(U_0,\Fcal)$ is
	torsion-simple in $\Mod{k[t]}$.
\end{prop}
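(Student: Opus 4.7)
The plan is to exploit the condition $\Fcal_\infty = 0$, which places $\Fcal$ in the hereditary torsion class $\Zcal_{\{\infty\}} \subseteq \Qcoh(\Pbb^1)$. By the previous Lemma, any such sheaf is in the essential image of the (fully faithful) direct-image functor $i_\ast$, so $\Fcal \simeq i_\ast\widetilde{M}$ canonically, where $M := \Gamma(U_0, \Fcal)$. My strategy is to show that the global-sections functor $\Gamma(U_0,-) = i^{-1}$ yields a bijection between subobjects of $\Fcal$ in $\Qcoh(\Pbb^1)$ and submodules of $M$ in $\Mod k[t]$ that moreover preserves the $\Hom$ groups appearing in the intrinsic characterisation of torsion parts given by Lemma~\ref{lemma:torsion-characterisation}. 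Since both categories are Grothendieck, \eqref{eq:hyp-star} holds in each, so that characterisation applies on both sides.

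For the bijection: hereditarity of $\Zcal_{\{\infty\}}$ ensures that every subsheaf $\Gcal \subseteq \Fcal$ is again in $\Zcal_{\{\infty\}}$, hence of the form $\Gcal \simeq i_\ast\widetilde{N}$ for $N := \Gamma(U_0, \Gcal) \subseteq M$; conversely, any submodule $N \subseteq M$ produces a subsheaf $i_\ast\widetilde{N} \subseteq i_\ast\widetilde{M} \simeq \Fcal$ (using that $i_\ast$ is exact on quasi-coherent sheaves, $i$ being an affine open immersion) whose stalk at infinity vanishes as a subobject of $\Fcal_\infty = 0$. The counit isomorphism $i^{-1} i_\ast \simeq \mathrm{id}$ shows the two assignments are mutually inverse. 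Since $\Zcal_{\{\infty\}}$ is also closed under quotients, both $\Gcal$ and $\Fcal/\Gcal$ lie in the essential image of $i_\ast$, and full faithfulness of $i_\ast$ (automatic for open immersions) combined with the exactness of $i^{-1}$ gives
\[
\Qcoh(\Pbb^1)(\Gcal, \Fcal/\Gcal) \simeq \Qcoh(U_0)(i^{-1}\Gcal, i^{-1}(\Fcal/\Gcal)) \simeq \Mod{k[t]}(N, M/N).
\]
Applying Lemma~\ref{lemma:torsion-characterisation} on both sides yields a bijection $\tbf_{\Qcoh(\Pbb^1)} \Fcal \simeq \tbf_{\Mod k[t]} M$ sending $0 \leftrightarrow 0$ and $\Fcal \leftrightarrow M$, so torsion-simplicity on the two sides is equivalent.

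The main subtlety is ensuring that all subobjects of $\Fcal$ in $\Qcoh(\Pbb^1)$ are captured---a priori one might worry about subsheaves that do not arise via restriction from $U_0$---but the hereditarity of $\Zcal_{\{\infty\}}$ forces every subsheaf to inherit the vanishing-stalk property, and hence to lie in the essential image of $i_\ast$. Once this is in place, the rest of the argument is a formal chain of adjunction and exactness manipulations.
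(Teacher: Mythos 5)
Your argument is correct. In substance it rests on the same two observations as the paper's proof --- that the vanishing of the stalk at $\infty$ is inherited by all subobjects and quotients of $\Fcal$ (so everything in sight stays inside the hereditary torsion class $\Zcal_{\{\infty\}}$, i.e.\ in the essential image of $i_\ast$), and that the relevant $\Hom$-groups can then be computed equivalently in $\Qcoh(\Pbb^1)$, in $\Qcoh(U_0)$ or in $\Mod k[t]$ --- but the packaging is genuinely different. The paper gets the transfer of torsion-simplicity for free by applying Proposition~\ref{prop:serre-restriction} twice, using $\Zcal_{\{\infty\}}$ as a pivot Serre subcategory of both $\Qcoh(\Pbb^1)$ and $\Qcoh(U_0)$ (precisely because $\Qcoh(U_0)$ itself is not Serre in $\Qcoh(\Pbb^1)$, the obstacle you also implicitly circumvent). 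You instead re-derive the content of that proposition by hand in this specific situation: an explicit bijection between subsheaves of $\Fcal$ and submodules of $M=\Gamma(U_0,\Fcal)$, compatible with quotients by exactness of $i^{-1}$, together with the adjunction/full-faithfulness isomorphism $\Qcoh(\Pbb^1)(\Gcal,\Fcal/\Gcal)\simeq \Mod{k[t]}(N,M/N)$, and then Lemma~\ref{lemma:torsion-characterisation} on both sides (both categories are Grothendieck, so~\eqref{eq:hyp-star} holds, as you note). Your version is more self-contained and makes the subobject correspondence explicit; the paper's is shorter and exhibits the general principle being used. Two minor remarks: exactness of $i_\ast$ (affineness of $i$) is not really needed --- left exactness gives the subsheaf $i_\ast\widetilde N\subseteq i_\ast\widetilde M$, and the quotient $\Fcal/\Gcal$ is handled, as you in fact do, by the stalk-vanishing lemma rather than by pushing forward a quotient; and when checking that the two assignments are mutually inverse you should observe not just that $i_\ast i^{-1}\Gcal\simeq\Gcal$ abstractly, but that the unit isomorphisms are compatible with the inclusions into $\Fcal\simeq i_\ast\widetilde M$, so that equal restrictions force equal subobjects --- this is immediate from naturality of the unit, but it is the point that makes the correspondence a bijection of subobject posets.
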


\begin{proof}
	The assumption is that $\Fcal\in \Zcal_{\{\infty\}}\subseteq\Qcoh(U_0)\simeq
	\Mod{k[t]}$: we are claiming that $\Fcal$ is torsion-simple in $\Qcoh(\Pbb^1)$
	if and only if so it is in $\Qcoh(U_0)$. To this end, we would like to apply
	Proposition~\ref{prop:serre-restriction}; but we cannot do it directly, as
	$\Qcoh(U_0)$ is not a Serre subcategory of $\Qcoh(\Pbb^1)$ (it is not closed
	under subobjects). On the other hand, $\Zcal_{\{\infty\}}$ is a Serre
	subcategory of both categories of quasi-coherent sheaves over $U_0$ and
	$\Pbb^1$: we can then apply the Proposition twice. Indeed,
	$\Fcal\in\Zcal_{\{\infty\}}$ is torsion-simple in $\Qcoh(\Pbb^1)$ if and only
	if so it is in $\Zcal_{\{\infty\}}$; and it is torsion-simple in
	$\Zcal_{\{\infty\}}$ if only if so it is in $\Qcoh(U_0)$.
\end{proof}

For coherent sheaves, this yields the following partial classification.

\begin{cor}
	Let $\Fcal\in\coh(\Pbb^1)$, and assume that $\Fcal_P=0$ for some (closed)
	point $P$.
	Then $\Fcal$ is torsion-simple in $\Qcoh(\Pbb^1)$ if and only if $\{Q\mid
	\Fcal_Q\neq 0\}$ has exactly one element.
\end{cor}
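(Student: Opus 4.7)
The plan is to use Proposition~\ref{prop:sheaf-with-gap} to reduce the question to the classification of torsion-simple finitely generated $k[t]$-modules given by Corollary~\ref{cor:ass-singleton}.

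First, I would reduce to the case $P=\infty$. The group $\mathrm{PGL}_2(k)$ acts transitively on the closed points of $\Pbb^1$, and an automorphism $\phi$ of $\Pbb^1$ with $\phi^{-1}(\infty)=P$ induces, via pullback, an autoequivalence of $\Qcoh(\Pbb^1)$ which preserves $\coh(\Pbb^1)$, torsion-simplicity (a categorical property), and the cardinality of $\{Q\mid\Fcal_Q\neq 0\}$ (since stalks transform by $(\phi^{\ast}\Fcal)_Q\simeq \Fcal_{\phi(Q)}$). So we may assume $\Fcal_{\infty}=0$, placing $\Fcal$ in $\Zcal_{\{\infty\}}$.

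Next, I would observe that $\Fcal$ is a torsion sheaf supported on finitely many closed points: the support of a coherent sheaf on a noetherian scheme is closed, and by hypothesis it is a proper closed subset of $\Pbb^1$, hence a finite set of closed points. Under the equivalence $\coh(U_0)\simeq\mod{k[t]}$, the module $M:=\Gamma(U_0,\Fcal)$ is therefore finitely generated and torsion, and its set-theoretic support in $\Spec k[t]$ corresponds bijectively to $\{Q\in\Pbb^1\mid\Fcal_Q\neq 0\}$ (which already sits inside $U_0$ after the reduction).

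I would then chain the relevant results. Proposition~\ref{prop:sheaf-with-gap} gives that $\Fcal$ is torsion-simple in $\Qcoh(\Pbb^1)$ iff $M$ is torsion-simple in $\Mod{k[t]}$; since $k[t]$ is noetherian, Corollary~\ref{cor:restriction} shows this is equivalent to torsion-simplicity in $\mod{k[t]}$; and Corollary~\ref{cor:ass-singleton} translates this into $|\Ass M|=1$. To finish, since $M$ is finitely generated and torsion over the PID $k[t]$, every prime in its support is maximal, hence minimal among primes in the support, and therefore associated; combined with the general inclusion $\Ass M\subseteq\Supp M$, this yields $\Ass M=\Supp M$. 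Thus $|\Ass M|=1$ iff $\{Q\mid\Fcal_Q\neq 0\}$ has exactly one element, as claimed. The main step that needs care is the initial geometric reduction (checking that the $\mathrm{PGL}_2(k)$-action transports all relevant data) together with the identification of $\Supp M$ in $\Spec k[t]$ with the stalk-support of $\Fcal$ in $\Pbb^1$; everything else is a direct invocation of results stated earlier.
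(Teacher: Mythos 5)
Your proposal is correct and follows essentially the same route as the paper: reduce to $\Fcal_\infty=0$ (the paper phrases this as changing the open covering rather than invoking the $\mathrm{PGL}_2(k)$-action), apply Proposition~\ref{prop:sheaf-with-gap} to pass to $\Gamma(U_0,\Fcal)\in\mod{k[t]}$, and conclude via the associated-prime criterion of \S\ref{subsec:commutative}. You merely spell out two details the paper leaves implicit, namely that coherence forces the support to be a finite set of closed points (so the unique associated prime cannot be the generic one) and that $\Ass M=\Supp M$ for the resulting torsion module.
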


\begin{proof}
	Without loss of generality (up to changing the open covering of $\Pbb^1$), we
	may assume that $\Fcal_\infty=0$. Then $\Fcal$ is torsion-simple in
	$\Qcoh(\Pbb^1)$ if and only if $\Gamma(U_0,\Fcal)\in\mod{k[t]}$ is torsion
	simple in $\Mod{k[t]}$. By the previous section, this is the case if and only
	if it has a unique associated prime. Then this prime corresponds to a closed point
	$Q$ of $\Pbb^1$, and the condition translates to $\Fcal_P=0$ for every $P\neq
	Q$.
\end{proof}

\begin{rmk}
	In this section, we are dealing with $\Pbb^1$, as we want to reduce to the
	Dedekind case to treat injectives. However, it is clear that the same result
	holds over $\Pbb^n$, for every $n$.
\end{rmk}

In the rest of this section, we address the indecomposable injectives of
$\Qcoh(\Pbb^1)$, starting with a complete list.
For every closed point $P$ of $\Pbb^1(k)$,
there is a skyscraper sheaf $\Ecal_P$ with sections
\[\Gamma(U,\Ecal_P)=\begin{cases} E(P) &\text{if }P\in U\\0
&\text{otherwise}\end{cases}\]
where $E(P)$ is the injective envelope (as a $k[t]$-module) of the residue
field of the stalk $\Ocal_P$ of the structural sheaf.
In addition to these, there is the \emph{generic} sheaf $\Gcal$, which is the constant
sheaf with sections $k(t)$ on every non-empty open.

It is clear that any constant sheaf, and therefore also $\Gcal$, belongs to
$\Qcoh(U_0)$. Moreover, if $P\neq \infty$ is a closed point, also
$\Ecal_P\in\Zcal_{\{\infty\}}\subseteq \Qcoh(U_0)$ (while similarly
$\Ecal_\infty\in\Qcoh(U_\infty)$).

This shows that for $\Ecal_P$ with $P$ a closed point we can apply
Proposition~\ref{prop:sheaf-with-gap}.

\begin{lemma}
	For every closed point $P$ of $\Pbb^1_k$, the indecomposable injective sheaf
	$\Ecal_P$ is torsion-simple in $\Qcoh(\Pbb^1)$.
\end{lemma}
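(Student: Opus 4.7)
The plan is to reduce the statement to the Dedekind case already handled in Proposition~\ref{prop:dedekind}, via the restriction result Proposition~\ref{prop:sheaf-with-gap}.

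First I would dispose of the case $P\neq\infty$. By construction $\Ecal_P$ has zero stalk everywhere except at $P$, so in particular $(\Ecal_P)_\infty=0$; hence $\Ecal_P$ lies in the Serre subcategory $\Zcal_{\{\infty\}}\subseteq \Qcoh(U_0)$ identified in the previous lemmas. Proposition~\ref{prop:sheaf-with-gap} then reduces the question to whether $\Gamma(U_0,\Ecal_P)=E(P)$ is torsion-simple in $\Mod k[t]$. But $E(P)$ is, by definition, the injective envelope of the residue field of $k[t]$ at the maximal ideal corresponding to the closed point $P\in U_0=\Spec k[t]$, hence an indecomposable injective $k[t]$-module. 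Since $k[t]$ is a PID, in particular a Dedekind domain, Proposition~\ref{prop:dedekind} yields that $E(P)$ is torsion-simple in $\Mod k[t]$, and the reduction is complete.

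For the remaining case $P=\infty$, I would invoke the evident symmetry between the two affine charts. Namely, replacing $U_0$ by $U_\infty$ throughout the proof of Proposition~\ref{prop:sheaf-with-gap} (and the two preparatory lemmas about $\Zcal_{\{\infty\}}$, which become statements about $\Zcal_{\{0\}}$) gives the analogous criterion: a quasi-coherent sheaf $\Fcal$ with $\Fcal_0=0$ is torsion-simple in $\Qcoh(\Pbb^1)$ iff $\Gamma(U_\infty,\Fcal)$ is torsion-simple in $\Qcoh(U_\infty)\simeq\Mod k[t]$. Applied to $\Ecal_\infty$, which has vanishing stalk at $0$, this reduces once again to the fact that the injective envelope of a residue field over $k[t]$ is torsion-simple, settled by Proposition~\ref{prop:dedekind}.

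There is essentially no obstacle, as all the real work has been done in Proposition~\ref{prop:sheaf-with-gap} and Proposition~\ref{prop:dedekind}. The only point that deserves a brief remark is the handling of $P=\infty$: one must observe that the covering of $\Pbb^1$ by $U_0$ and $U_\infty$ is symmetric and so the pivot role played by $\Zcal_{\{\infty\}}$ in the proof of Proposition~\ref{prop:sheaf-with-gap} can equally be played by $\Zcal_{\{0\}}$. This can be done either by appealing to a change-of-coordinates automorphism of $\Pbb^1$ swapping $0$ and $\infty$, or simply by noting that the proof of Proposition~\ref{prop:sheaf-with-gap} never used any property specific to the point $\infty$.
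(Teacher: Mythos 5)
Your proof is correct and matches the paper's argument: the paper likewise assumes $P\neq\infty$ ``up to changing open covering of $\Pbb^1$'' and then combines Proposition~\ref{prop:sheaf-with-gap} with Proposition~\ref{prop:dedekind} applied to the PID $k[t]$. Your explicit discussion of the symmetry handling the case $P=\infty$ is just a more detailed version of that same reduction.
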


\begin{proof}
	Up to changing open covering of $\Pbb^1$, we may assume that $P\neq \infty$.
	Then, by Proposition~\ref{prop:sheaf-with-gap}, the indecomposable injective
	$\Ecal_P\in\Zcal_{\{\infty\}}$ is torsion-simple in $\Qcoh(\Pbb^1)$ if and
	only if $\Gamma(U_0,\Ecal_P)\simeq E(P)$ is torsion-simple in $\Mod{k[t]}$. This module
	is an indecomposable injective over a PID, and therefore it is torsion-simple by
	Proposition~\ref{prop:dedekind}.
\end{proof}

Notice that the same argument does not apply to the generic injective $\Gcal$,
which is not contained in $\Zcal_\Scal$ for any $\Scal\neq\emptyset$ because its
stalks never vanish. We conclude by showing directly that $\Gcal$ is
torsion-simple in $\Qcoh(\Pbb^1)$ nonetheless.

\begin{prop}
	Every indecomposable injective of $\Qcoh(\Pbb^1)$ is torsion-simple.
\end{prop}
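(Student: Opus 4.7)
The preceding lemma handles the skyscraper injectives $\Ecal_P$ attached to closed points, so what remains is to show that the generic injective $\Gcal$ is torsion-simple. Since $\Gcal$ has non-zero stalks everywhere, it lies in no $\Zcal_{\Scal}$ with $\Scal\neq\emptyset$ and Proposition~\ref{prop:sheaf-with-gap} is unavailable; the plan is instead to mimic the strategy of Corollary~\ref{cor:division-ring}, exploiting the large endomorphism ring of $\Gcal$ in conjunction with Proposition~\ref{prop:stable}.

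The first step is to identify $\End_{\Qcoh(\Pbb^1)}(\Gcal)\simeq k(t)$. Because $\Gcal$ is the constant sheaf at $k(t)$ on every non-empty open, with all restriction maps the identity, any $\Ocal_{\Pbb^1}$-linear endomorphism of $\Gcal$ restricts on $U_0$ to a $k[t]$-linear endomorphism of $k(t)$, which must be multiplication by some $\alpha\in k(t)$; compatibility on $U_\infty$ forces the same $\alpha$, and conversely every such $\alpha$ defines an endomorphism of $\Gcal$.

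Now let $\Hcal\subseteq\Gcal$ be a torsion part. By Proposition~\ref{prop:stable}, $\Hcal$ is stable under every endomorphism of $\Gcal$, so $\Hcal|_{U_0}\subseteq\Gcal|_{U_0}=k(t)$ is closed under multiplication by each element of $k(t)$, and is therefore a $k(t)$-subspace of $k(t)$. Hence $\Hcal|_{U_0}\in\{0,k(t)\}$, and likewise $\Hcal|_{U_\infty}\in\{0,k(t)\}$.

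Finally, the gluing description of quasi-coherent sheaves on $\Pbb^1$ requires $\Hcal|_{U_0}\otimes_{k[t]}k[t,1/t]\simeq \Hcal|_{U_\infty}\otimes_{k[1/t]}k[t,1/t]$ as sections over $U_0\cap U_\infty$; since $k(t)$ is already a module over $k[t,1/t]$, each of these tensor products equals its first factor, forcing $\Hcal|_{U_0}$ and $\Hcal|_{U_\infty}$ to be simultaneously zero (giving $\Hcal=0$) or simultaneously $k(t)$ (giving $\Hcal=\Gcal$). No step looks like a serious obstacle: the crux is the observation that $\End(\Gcal)$ is a field large enough to trivialize the stability condition of Proposition~\ref{prop:stable}, reducing the problem to a gluing check.
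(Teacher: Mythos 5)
Your proof is correct and follows essentially the same route as the paper's: the skyscrapers $\Ecal_P$ are handled by the preceding lemma, and for the generic injective $\Gcal$ one identifies $\End_{\Qcoh(\Pbb^1)}(\Gcal)\simeq k(t)$, invokes Proposition~\ref{prop:stable} to see that any torsion part restricts on $U_0$ and $U_\infty$ to a $k(t)$-subspace of $k(t)$, hence to $0$ or everything, and then the gluing over $U_0\cap U_\infty$ rules out the mixed cases, leaving only $0$ and $\Gcal$. The only cosmetic difference is that the paper computes the endomorphism ring via the full embedding $\Qcoh(U_0)\subseteq\Qcoh(\Pbb^1)$ rather than by a direct constant-sheaf calculation.
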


\begin{proof}
	We prove that $\Gcal$, the constant sheaf with sections $k(t)$, is
	torsion-simple. First, as we mentioned $\Gcal\in\Qcoh(U_0)\simeq
	\Mod(k[t])$, where it corresponds to the module $k(t)$. Since $\Qcoh(U_0)$ is
	a full subcategory, we have isomorphisms
	$\End_{\Qcoh(\Pbb^1)}(\Gcal)\simeq\End_{\Qcoh(U_0)}(\Gcal)\simeq
	\End_{k[t]}(\Gamma(U_0,\Gcal))\simeq k(t)$. If $\Fcal\subseteq\Gcal$
	is any subsheaf closed under endomorphisms (not necessarily lying in
	$\Qcoh(U_0)$), then $\Gamma(U_0,\Fcal)\subseteq\Gamma(U_0,\Gcal)$ is also a
	submodule closed under endomorphisms, because any endomorphisms of the section
	module of $\Gcal$ lifts to an endomorphism of $\Gcal$, by the isomorphisms
	above. Similarly, $\Gamma(U_\infty,\Fcal)\subseteq\Gamma(U_\infty,\Gcal)\simeq
	k(t)$ is closed under endomorphisms. Therefore both
	$\Gamma(U_0,\Fcal)$ and $\Gamma(U_\infty,\Fcal)$ are isomorphic either to $k(t)$ or to $0$.
	Of the four combinations of these restrictions, only two agree on the
	intersection, and they glue to the subobjects $0, \Gcal\subseteq \Gcal$.
	These are therefore the only torsion parts of $\Gcal$ in $\Qcoh(\Pbb^1)$.
\end{proof}

\begin{rmk}
	It is well known that the category $\Qcoh(\Pbb^1)$ is derived-equivalent to
	the category $\Mod k(\begin{tikzcd}[cramped,sep=small]\bullet\arrow[shift
	left]{r}\arrow[shift right]{r}&\bullet\end{tikzcd})$ of modules over the
	path algebra of the Kronecker quiver (originally proved by
	Beĭlinson~\cite{beil-78}; see Bondal~\cite[Ex.~6.3]{bond-89}).
	In fact, these two abelian categories are related by HRS-tilting at a torsion
	pair (see \cite[Cor.~I.2.2]{happ-reit-smal-96} for the definition,
	\cite[Figure~5.3]{kell-07} for a picture).
	Under this perspective, the injective sheaves $\Ecal_P$ are the Prüfer
	modules over the Kronecker, while the generic injective $\Gcal$ is the
	generic module. By Example~\ref{ex:path-algebra}, none of these modules over
	the Kronecker is torsion-simple. This shows that the torsion-simplicity of
	an object of an abelian category $\Acal$ cannot be recognised from the point
	of view of the derived category $\D(\Acal)$ of $\Acal$.
\end{rmk}

\subsection{An alternative proof of Corollary~\ref{cor:r/p}}\label{subsec:commalg}

As promised, we now present a more elementary proof of the fact that the cyclic
module $R/\pf$ is torsion-simple, for a commutative noetherian ring $R$ and a
prime ideal $\pf$.

The proof will be split into a few lemmas; but first we rephrase the claim. We
want to prove that for every non-trivial proper submodule $N$ of $R/\pf$, there
is a non-zero morphism $N\to (R/\pf)/N$: since $N$ has the form $I/\pf$ for
$\pf<I<R$, we want to prove that
\[0\neq \Hom_R(I/\pf,R/I)\simeq\Hom_{R/\pf}(I/\pf,R/I).\]
Since $S:=R/\pf$ is a domain, this argument shows that our claim is equivalent to
the following fact, which we are going to prove instead.

\begin{prop}\label{prop:morphisms}
	Let $S$ be a commutative noetherian domain and $0\neq I <S$ a nonzero proper
	ideal. Then $\Hom_S(I,S/I)\neq 0$.
\end{prop}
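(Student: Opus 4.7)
The plan is to reduce to a local Artinian setting and then construct a nonzero homomorphism $I \to S/I$ directly, by exploiting a nonzero element of the socle of $S/I$ together with Nakayama's lemma.

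For the reduction, I first observe that since $I$ is finitely generated over the noetherian ring $S$, the $\Hom$ functor commutes with localization: $\Hom_S(I,S/I)_\pf \simeq \Hom_{S_\pf}(I_\pf, S_\pf/I_\pf)$ for every prime $\pf$. It is therefore enough to prove the statement after localizing at a well-chosen prime. I take $\pf$ to be a minimal prime over $I$; then $\pf S_\pf = \sqrt{I_\pf}$ inside $S_\pf$, so that $S_\pf/I_\pf$ is a local Artinian ring. Moreover $S_\pf$ is still a local noetherian domain, and $I_\pf \neq 0$ since $S$ is a domain. Thus, replacing $(S,I)$ by $(S_\pf, I_\pf)$, I may assume from the start that $(S,\mf)$ is local and $S/I$ is a nonzero Artinian local ring.

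For the construction, being Artinian local, $S/I$ has nonzero socle $(I:\mf)/I$, so I can pick an element $s \in (I:\mf) \setminus I$ (equivalently, $s\mf \subseteq I$ but $s \notin I$). Let $a_1,\ldots,a_n$ be a minimal generating set of $I$; since $I \neq 0$, we have $n \geq 1$, and by Nakayama the images $\bar a_j \in I/\mf I$ form a basis over $S/\mf$. I then define $\phi : I \to S/I$ by the rule $\sum_j r_j a_j \mapsto \overline{r_1 s}$. This map is manifestly $S$-linear, and it is nonzero because $\phi(a_1) = \bar{s} \neq 0$ by the choice of $s$.

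The main obstacle is the well-definedness of $\phi$: I must verify that any syzygy $\sum_j r_j a_j = 0$ in $S$ forces $r_1 s \in I$. Here the minimality of the generating set, combined with Nakayama's lemma, is decisive: reducing modulo $\mf$ the relation $\sum_j \bar r_j \bar a_j = 0$ in the $(S/\mf)$-vector space $I/\mf I$ forces $\bar r_j = 0$, i.e.\ $r_j \in \mf$ for every $j$. In particular $r_1 \in \mf$, so $r_1 s \in \mf s \subseteq I$ by the defining property of $s$, as required. This check is the single step tying together the two ingredients of the construction (the socle element and the minimal set of generators).
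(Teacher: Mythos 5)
Your proof is correct, and it takes a genuinely different route from the paper's. The paper argues globally: it rewrites $\Hom_S(I,S/I)\simeq\Hom_{S/I}(I/I^2,S/I)$, identifies this with the kernel of the transpose of a presentation matrix of $I/I^2$ over $S/I$, and shows that kernel is nonzero via McCoy's theorem on ranks of matrices over commutative rings, the key input being the lemma that $dI\subseteq I^2$ forces $d\in\sqrt I$ in a noetherian domain. You instead localise at a minimal prime $\pf$ over $I$ (legitimate, since $I$ is finitely presented, so $\Hom$ commutes with localisation and a module with a nonzero localisation is nonzero), which makes $S_\pf/I_\pf$ Artinian local with nonzero socle, and then exhibit an explicit nonzero map: project onto the first coordinate of $I/\mf I$ with respect to a minimal generating set and multiply by a socle element $s$; well-definedness is exactly the standard consequence of Nakayama that syzygies of a minimal generating set have all coefficients in $\mf$, so that $r_1s\in\mf s\subseteq I$. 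Note that both arguments use the domain hypothesis in an essential and parallel way: for you it guarantees $I_\pf\neq 0$ (in the paper's counterexample $S=\Cbb[x,y]/(xy)$, $I=(x)$, the ideal dies at its unique minimal prime), while for the paper it powers the lemma on $dI\subseteq I^2$. Your argument is arguably more elementary, avoiding McCoy ranks entirely, and it makes visible that the nonzero morphism produced factors through the residue field of $S_\pf$ (it is a coordinate functional on $I/\mf I$ followed by multiplication by a socle element), whereas the paper's proof avoids localisation and works directly with the $S/I$-module $I/I^2$.
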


From now on we will adhere to the notation of this statement. We immediately
notice that $\Hom_S(I,S/I)\simeq \Hom_{S/I}(I/I^2,S/I)$ by $\otimes$-$\Hom$
adjunction. Consider a free
$S/I$-presentation of the finitely presented module $I/I^2$, with relation
matrix $M\in M_{n\times m}(S/I)$:
\[\begin{tikzcd}[column sep=1.5pc]
	(S/I)^m \arrow{r}{M} & (S/I)^n \arrow{r} & I/I^2 \arrow{r} & 0.
\end{tikzcd}\]

Applying the $S/I$-dual $\Hom_{S/I}(-,S/I)$ we obtain an exact sequence:
\[\begin{tikzcd}[column sep=1.5pc]
	0 \arrow{r} &\Hom_{S/I}(I/I^2,S/I) \arrow{r} & (S/I)^n \arrow{r}{M^t} & (S/I)^m
\end{tikzcd}\]
which identifies the first $\Hom$-space with the kernel of the transpose
$M^t\in M_{m\times n}(S/I)$. Now we recall some tools to study this kernel.

\begin{dfn}[McCoy {\cite[pg.\ 159]{mcco-62}}]
	Let $A\in M_{m\times n}(S)$ be a matrix. For $r\geq 0$ define
	$D_r(A)$ as the ideal of $S$ generated by the minors of $A$ of order $r$,
	where conventionally $D_0(A)=S$. Notice that $S=D_0(A)\supseteq
	D_1(A)\supseteq\cdots\supseteq D_{\mathsf{min}\{m,n\}}(A)\supseteq
	0=\cdots$, by the Laplace expansion formula for the determinant. The \emph{McCoy
	rank} $\mcrk A$ of $A$ is defined as the largest $r\geq 0$ for which
	$\Ann(D_r(A))=0$.
\end{dfn}

The McCoy rank works similarly to the usual rank of matrices over a field.

\begin{thm}[{\cite[Thm.~51]{mcco-62}}]
	A matrix $A\in M_{m\times n}(S)$ has nonzero nullvectors if and only if $\mcrk A<n$.
\end{thm}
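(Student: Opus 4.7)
The plan is to prove both directions by Cramer-type manipulations on the minors of $A$.

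For $(\Rightarrow)$, I would suppose $v = (v_1, \ldots, v_n)^t \neq 0$ satisfies $Av = 0$. If $m < n$ then $D_n(A) = 0$ by convention and $\mcrk A < n$ trivially. Otherwise, for any $n \times n$ submatrix $B$ of $A$, restricting $Av = 0$ to the corresponding $n$ rows yields $Bv = 0$; multiplying on the left by the adjugate of $B$ produces $(\det B)\, v = 0$, so every coordinate of $v$ is annihilated by $\det B$. A nonzero coordinate $v_j$ therefore sits in $\Ann(D_n(A))$, witnessing $\mcrk A < n$.

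For $(\Leftarrow)$, set $r := \mcrk A < n$ and pick $0 \neq s \in \Ann(D_{r+1}(A))$. If $r = 0$ then $s$ annihilates every entry of $A$, so $v := s\, e_1$ is a nonzero nullvector. If $r \geq 1$, my plan is to concentrate the nullvector in its first $r+1$ coordinates via a Cramer-like formula. After permuting rows and columns I may assume that some $r \times r$ submatrix $B$ sits in the top-left corner; writing $B_0$ for the top-left $r \times (r+1)$ submatrix of $A$, I would define $v_k := (-1)^{k+1} s \det(B_0^{(k)})$ for $k = 1, \ldots, r+1$ and $v_k := 0$ otherwise, where $B_0^{(k)}$ denotes $B_0$ with column $k$ removed. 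For each row index $i$, the expression $(Av)_i$ is then precisely $s$ times the Laplace expansion, along its first row, of the $(r+1) \times (r+1)$ matrix $C_i$ obtained by placing row $i$ of $A$ atop $B_0$. When $i \leq r$, the matrix $C_i$ has a repeated row and $\det C_i = 0$; when $i > r$, $\det C_i$ is a genuine $(r+1) \times (r+1)$ minor of $A$, hence annihilated by $s$. Thus $Av = 0$.

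The main obstacle is verifying $v \neq 0$. The only coordinate that is manifestly a single minor is $v_{r+1} = \pm\, s \det B$, and knowing $\det B \neq 0$ is not enough to guarantee $s \det B \neq 0$. The decisive point --- and the real content of the definition of McCoy rank --- is that we are free to choose \emph{which} $r \times r$ submatrix plays the role of $B$: since $\Ann(D_r(A)) = 0$ and $s \neq 0$, there must exist some $r \times r$ minor $d$ of $A$ with $sd \neq 0$, and I would select $B$ to be the corresponding submatrix. With that choice $v_{r+1} \neq 0$, completing the construction.
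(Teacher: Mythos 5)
Your argument is correct. Note that the paper does not prove this statement at all: it is quoted from McCoy's book [Thm.~51] as a black box, so there is no in-paper proof to compare against. What you have written is essentially McCoy's classical argument: the adjugate trick $\mathrm{adj}(B)Bv=(\det B)v=0$ shows a nonzero coordinate of a nullvector annihilates $D_n(A)$, and conversely, for $r=\mcrk A<n$ one picks $0\neq s\in\Ann(D_{r+1}(A))$ and builds a nullvector out of signed $r\times r$ minors bordering a chosen $r\times(r+1)$ block, the rows $i\leq r$ vanishing by repeated rows and the rows $i>r$ by $s$ killing all $(r+1)$-minors. You also correctly isolate and resolve the one delicate point, namely that $\Ann(D_r(A))=0$ lets you choose the submatrix $B$ with $s\det B\neq 0$, guaranteeing $v\neq 0$; the degenerate cases ($m<n$, $r=0$, $r=m$) are all handled or vacuous as you indicate.
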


Our goal now is to show that $\mcrk{M^t}=\mcrk{M}$ is less that $n$, and then use this result. For this
we need a couple of technical lemmas.

\begin{lemma}
	Let $S$ be a commutative noetherian domain, $0\neq I\leq S$ a nonzero ideal
	and $d\in S$. If $dI\subseteq I^2$, then $d\in \sqrt I$.
\end{lemma}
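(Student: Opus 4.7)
The plan is to invoke the classical determinant trick (the same device behind the Cayley--Hamilton theorem for finitely generated modules and Nakayama's lemma). Since $S$ is noetherian, the ideal $I$ is finitely generated: write $I=(a_1,\dots,a_n)$ with not all $a_i$ zero.

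Using the hypothesis $dI\subseteq I^2$, for each $i$ we can expand $da_i\in I^2=(a_1,\dots,a_n)\cdot I$, so there exist elements $b_{ij}\in I$ such that
\[da_i=\sum_{j=1}^n b_{ij}a_j\qquad\text{for every }i=1,\dots,n.\]
Equivalently, if $B:=(b_{ij})\in M_n(S)$ and $\mathbf{a}:=(a_1,\dots,a_n)^t$, then $(d\cdot \mathbf{1}_n-B)\mathbf{a}=0$.

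Multiplying on the left by the adjugate of $d\cdot\mathbf{1}_n-B$ yields $\det(d\cdot\mathbf{1}_n-B)\cdot a_j=0$ for every $j$. Since $S$ is a domain and $\mathbf{a}\neq 0$, we conclude that $\det(d\cdot\mathbf{1}_n-B)=0$. On the other hand, expanding the determinant (by the Leibniz formula, or by cofactors) gives
\[\det(d\cdot\mathbf{1}_n-B)=d^n+r,\]
where $r\in I$ collects all the remaining monomials, each of which involves at least one entry of $B$ and therefore lies in $I$. Thus $d^n=-r\in I$, which is exactly the statement that $d\in\sqrt I$.

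I do not expect any real obstacle: the only point requiring care is the bookkeeping in the expansion of the determinant to confirm that every term besides $d^n$ contains at least one factor $b_{ij}\in I$, and the appeal to $S$ being a domain (with $I\neq 0$) to cancel the nonzero $a_j$ after applying the adjugate.
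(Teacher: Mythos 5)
Your proof is correct, and it takes a genuinely different route from the paper. You invoke the classical determinant (adjugate) trick: writing $I=(a_1,\dots,a_n)$, the hypothesis $dI\subseteq I^2=I\cdot(a_1,\dots,a_n)$ yields a matrix $B$ with entries in $I$ and $(d\cdot\mathbf{1}_n-B)\mathbf{a}=0$, whence $\det(d\cdot\mathbf{1}_n-B)$ annihilates the nonzero vector $\mathbf{a}$ and must vanish because $S$ is a domain; since every non-leading term of that determinant lies in $I$, you get $d^n\in I$, hence $d\in\sqrt I$. Each step checks out (in particular, any element of $I^2$ is indeed an $I$-linear combination of the $a_j$, and the Leibniz expansion confirms that all terms besides $d^n$ carry a factor $b_{ij}\in I$). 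The paper instead argues by induction on the number of generators of $I$: the principal case is handled by cancellation in the domain, and the inductive step replaces $d$ by $d-\sum_j c_{1j}^1x_j$ so that $dx_1$ lands in $J^2$ for $J=(x_2,\dots,x_n)$, then applies the inductive hypothesis to $d^3$ and $J$. Your argument is shorter, avoids the somewhat delicate substitution and the passage to $d^3$, and gives a sharper conclusion ($d^n\in I$ with $n$ the number of generators, i.e.\ $d$ is integral over $I$ in the sense of integral closure of ideals); the paper's induction is more hands-on and self-contained but yields only membership in the radical. Both proofs use the domain hypothesis and $I\neq 0$ in an essential and parallel way, to cancel a nonzero element after the key identity.
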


\begin{proof}
	Since $I$ is finitely generated, we argue by induction on the number of
	generators. If $I=(x)$ is principal and $dI\subseteq I^2$, this means that
	$dx=cx^2$, so $(d-cx)x=0$. Since $S$ is a domain and $x\neq 0$, we obtain
	$d=cx\in I\subseteq\sqrt I$.

	Assume $I$ has $n$ generators $x_1,\dots, x_n$; from now on $i,j,k$ are
	understood to be indices $1\leq i,j,k\leq n$. Since $I^2$ is generated by
	$\{x_ix_j\mid i\leq j\}$, and by assumption $dx_k\in I^2$
	for every $k$, let $c_{ij}^k\in S$ be coefficients
	such that $dx_k=\sum_{i\leq j} c_{ij}^kx_ix_j$.

	Let $d':=d-\sum_{j}c_{1j}^1x_j$: since $d-d'\in I$, we have that
	$dI\subseteq I^2$ if and only if $d'I\subseteq I^2$, and also $d\in \sqrt
	I$ if and only if $d'\in \sqrt I$.
	So, up to substituting $d:=d'$, we may assume that
	$c_{1j}^1=0$ for every $j$. In other words, we can assume that $dx_1\in
	J^2$, where $J:=(x_2,\dots,x_n)\subseteq I$. Therefore, we also obtain that
	$d^3J\subseteq d^2dI\subseteq d^2I^2=(dI)^2\subseteq J^2$.
	By the inductive hypothesis applied to $d^3$ and $J$,
	we deduce that $d^3\in \sqrt J\subseteq \sqrt I$, and so $d\in \sqrt I$.
\end{proof}

\begin{ex}
	This result fails if we drop the domain assumption: an easy counterexample
	is $S=\Cbb[x,y]/(xy)$, $d=x+y$ and $I=(x)$. Unsurprisingly, this is also a
	counterexample to our Proposition without the domain assumption, as
	$I=(x)\simeq k[x]\leq S$ has no nonzero morphism to $S/I\simeq k[y]$.
\end{ex}

\begin{lemma}
	All the minors of order $n$ of $M$ are nilpotent in $S/I$.
\end{lemma}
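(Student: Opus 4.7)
The plan is to use the adjugate identity to translate ``$d$ is the determinant of an $n\times n$ submatrix of $M$'' into a statement of the form $dI\subseteq I^2$, and then invoke the previous lemma.

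More precisely, let $d\in S/I$ be an $n\times n$ minor of $M$, namely the determinant of the submatrix $N\in M_n(S/I)$ obtained by selecting some $n$ columns of $M$. The adjugate identity reads
\[
	N\cdot\mathrm{adj}(N)=d\cdot \mathrm{I}_n,
\]
so for each standard basis vector $e_i\in(S/I)^n$, the element $d\cdot e_i$ lies in the image of $N$, and hence in the image of $M$. By exactness of the presentation, this means that $d\cdot e_i$ is sent to $0$ in $I/I^2$ by the map $(S/I)^n\twoheadrightarrow I/I^2$ that carries the $i$-th standard generator to $x_i+I^2$.

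Now I would unravel what this says: lifting $d\in S/I$ to some $\tilde d\in S$, the vanishing of the image of $d\cdot e_i$ in $I/I^2$ amounts to $\tilde d\, x_i\in I^2$ for every $i\in\{1,\dots,n\}$. Since the $x_i$ generate $I$, this yields $\tilde d\, I\subseteq I^2$ in $S$. Applying the previous lemma, we conclude $\tilde d\in\sqrt{I}$, and therefore $d$ is nilpotent in $S/I$, as claimed.

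The main work has essentially been done by the preceding lemma; the only step requiring a little care is the bookkeeping between $S$, $S/I$ and $S/I^2$ when interpreting the kernel of $(S/I)^n\to I/I^2$, but this is routine since $I\cdot x_i\subseteq I^2$ makes the choice of lift $\tilde d$ irrelevant.
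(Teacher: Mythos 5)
Your argument is correct, and it reaches the conclusion by a different mechanism than the paper. Both proofs reduce to showing that the minor $d$ (or a power of it) annihilates $I/I^2$ and then invoke the preceding lemma ($\tilde d\,I\subseteq I^2\Rightarrow \tilde d\in\sqrt I$). The paper gets there by localising $S/I$ at $\bar d$: the chosen $n\times n$ submatrix becomes invertible, so by right-exactness of localisation $(I/I^2)_{\bar d}=0$, and finite generation of $I/I^2$ then yields a single power with $d^{\,r}I\subseteq I^2$. You instead use the adjugate identity $N\cdot\mathrm{adj}(N)=d\cdot \mathrm{I}_n$ to see that each $d\cdot e_i$ lies in $\im N\subseteq\im M=\Ker\bigl((S/I)^n\to I/I^2\bigr)$, i.e.\ that $d$ annihilates $I/I^2$ outright; this is precisely the standard proof that the zeroth Fitting ideal of a finitely presented module annihilates it. Your route is more elementary (no localisation, no appeal to finite generation to extract a uniform power) and gives the sharper intermediate statement $\tilde d\,I\subseteq I^2$ with exponent $1$. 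One small bookkeeping remark: for an arbitrary presentation the standard generators $e_i$ map to elements $y_i+I^2$ which generate $I/I^2$, but their lifts $y_i$ need only generate $I$ modulo $I^2$, not $I$ itself, so your phrase ``since the $x_i$ generate $I$'' is not automatic; it is also not needed, because $\tilde d\,y_i\in I^2$ for all $i$ together with the trivial inclusion $\tilde d\,I^2\subseteq I^2$ already gives $\tilde d\,I\subseteq I^2$. With that noted, your proof is complete.
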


\begin{proof}
	Assume $M$ has a minor $\bar d = d+I \in S/I$ of order $n$ which is not
	nilpotent. Then, we can localise $S/I$ away from $\bar d$, inverting the multiplicative set
	$\{1,\bar d,\bar d^2,\dots\}$. The $n\times n$-submatrix of $M$ with
	determinant $\bar d$ then becomes
	invertible, which makes $M$ have a right-inverse; hence, the
	localisation of the map represented by $M$ is surjective. Since localisation
	is right-exact, we deduce that the localisation of $I/I^2$ away from $\bar d$
	vanishes. This is the case if and only if every element of $I/I^2$ is
	annihilated by a power of $\bar d$, and since $I/I^2$ is finitely generated,
	there is a power $\bar d^r$ of $\bar d$ which annihilates the whole module.
	This means that $d^rI\subseteq I^2$, and by the previous lemma we deduce
	that $d\in \sqrt I$. Therefore, $\bar d$ is nilpotent, which is a
	contradiction.
\end{proof}

\begin{cor}
	$\mcrk M<n$.
\end{cor}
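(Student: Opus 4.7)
The plan is to prove the equivalent statement $\Ann(D_n(M))\neq 0$ in $S/I$, which by the very definition of McCoy rank translates to $\mcrk M<n$.

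First I would dispose of the edge case $m<n$: here $M$ has no $n\times n$ submatrices at all, so $D_n(M)=0$, and $\Ann(0)=S/I$ is nonzero because we assumed $I<S$. Hence $\mcrk M<n$ trivially.

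For the main case $m\geq n$, the previous Lemma ensures that every generator of the ideal $D_n(M)\subseteq S/I$ (namely each $n\times n$ minor of $M$) is nilpotent. Since $S/I$ is noetherian, $D_n(M)$ is finitely generated, say by nilpotents $\bar d_1,\dots,\bar d_s$ with $\bar d_i^{N_i}=0$. A pigeonhole argument on the exponents then gives $D_n(M)^{N_1+\cdots+N_s}=0$: any product of that many generators must contain some $\bar d_i$ to the power at least $N_i$. So $D_n(M)$ is itself a nilpotent ideal of $S/I$. Pick $k\geq 1$ minimal with $D_n(M)^k=0$. If $k=1$, then $D_n(M)=0$ and $\Ann(D_n(M))=S/I\neq 0$ as above. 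If $k\geq 2$, then $D_n(M)^{k-1}$ is a nonzero ideal of $S/I$ that is annihilated by $D_n(M)$, so $\Ann(D_n(M))\supseteq D_n(M)^{k-1}\neq 0$. In either case we conclude.

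I do not anticipate a real obstacle: the only conceptual step is bridging the gap from \emph{every} $n\times n$ minor being nilpotent (the content of the previous Lemma) to the \emph{whole} ideal $D_n(M)$ being nilpotent, which is underwritten by the noetherianity of $S/I$; everything else is a direct unpacking of the McCoy rank definition together with the observation that in a commutative ring, a nonzero nilpotent ideal always has nonzero annihilator (take the top nonzero power).
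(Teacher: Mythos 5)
Your proof is correct and takes essentially the same route as the paper: exhibit a nilpotent ideal containing $D_n(M)$ and use its last nonzero power (or all of $S/I$ when the ideal vanishes) as a nonzero annihilator of $D_n(M)$, which by definition forces $\mcrk M<n$. The only difference is cosmetic: the paper passes through the nilradical of $S/I$, whose nilpotency it extracts from noetherianity, whereas you nilpotentiate $D_n(M)$ itself from its finitely many nilpotent generators --- note that this finiteness comes from $M$ being a finite matrix, not from noetherianity, so your appeal to the noetherian hypothesis at that point is superfluous (and your argument in fact works without it).
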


\begin{proof}
	By the lemma above, all the $n$-minors of $M$ are nilpotent, ie.\ they are
	contained in the nilradical $N\leq S/I$. Since $S/I$ is noetherian, $N^r=0$
	for some minimal $r\geq1$, so we have that $D_n(M)\subseteq N$ is annihilated by
	$N^{r-1}\neq 0$ (where $N^0$ should be read as $S$, in case $S$ is reduced and
	$r=1$).
\end{proof}

By McCoy's~Theorem, this proves Proposition~\ref{prop:morphisms}, which as
observed at the beginning is equivalent to our claim.

\begin{rmk}\label{rmk:noetherian}
	As a final observation, notice that we only used the hypothesis that the ring
	$R$ is noetherian to ensure that $S:=R/\pf$ is as well. Therefore the proof
	can be applied also when $R$ is not noetherian, but $R/\pf$ is.
\end{rmk}

\bibliographystyle{plain}
\bibliography{torsion-simple.bib}
\end{document}